\newtheorem{prelem}{{\bf Theorem}}
 \newtheorem{theorem}{Theorem}
\newtheorem{lemma}[theorem]{Lemma}
\newtheorem{observation}[theorem]{Observation}
\newtheorem{proposition}[theorem]{Proposition}
\theoremstyle{definition}
\theoremstyle{remark}
\newcommand{\diam}{\operatorname{diam}}
\title{On the total and strong version for Roman dominating functions in graphs}
\author{
S. Nazari-Moghaddam$^{(1)}$, M. Soroudi$^{(1)}$, S.M. Sheikholeslami$^{(1)}$ and I.G. Yero$^{(2)}$\vspace{5mm}
\\
$^{(1)}$ Department of Mathematics, Azarbaijan Shahid Madani University\\
Tabriz, I.R. Iran\\
$^{(2)}$ Departmento de Matem\'aticas, Universidad de C\'adiz\\
Av Ram\'on Puyol s/n, 11202 Algeciras, Spain
\vspace{5mm}\\
{\tt \{s.nazari;m.soroudi;s.m.sheikholeslami\}@azaruniv.ac.ir,}\\ {\tt ismael.gonzalez@uca.es}
\vspace{3mm}\\}
\begin{document}
\maketitle

\begin{abstract}
Consider a finite and simple graph $G=(V,E)$ with maximum degree $\Delta$.
A strong Roman dominating function over the graph $G$ is understood as a map $f : V (G)\rightarrow \{0, 1,\ldots , \left\lceil
\frac{\Delta}{2}\right\rceil+ 1\}$ which carries out the condition stating that all the vertices $v$ labeled $f(v)=0$ are adjacent to at least one another vertex $u$ that satisfies $f(u)\geq 1+ \left\lceil \frac{1}{2}\vert N(u)\cap V_0\vert \right\rceil$, such that
$V_0=\{v \in V \mid f(v)=0 \}$ and the notation $N(u)$ stands for the open neighborhood of $u$.
The total version of one strong Roman dominating function includes the additional property concerning the not existence of vertices of degree zero in the subgraph of $G$, induced by the set of vertices labeled with a positive value.
The minimum possible value for the sum $\omega(f)=f(V)=\sum_{v\in V} f(v)$ (also called the weight of $f$), taken amongst all existent total strong Roman dominating functions $f$ of $G$, is called the total strong Roman domination number of $G$, denoted by $\gamma_{StR}^t(G)$. This total and strong version of the Roman domination number (for graphs) is introduced in this research, and the study of its mathematical properties is therefore initiated. For instance,  we establish upper bounds for such parameter, and relate it with several parameters related to vertex domination in graphs, from which we remark the standard domination number, the total version of the standard domination number and the (strong) Roman domination number. In addition, among other results, we show that for any tree $T$ of order $n(T)\ge 3$, with maximum degree $\Delta(T)$ and $s(T)$ support vertices,
$\gamma_{StR}^t(T)\ge \left\lceil \frac{n(T)+s(T)}{\Delta(T)}\right\rceil+1$.

\vspace{1.5mm}

\vspace{0.9mm}

\noindent{$\mathrm{Keywords}$}:  Total strong Roman dominating function; (total) Roman domination number; total dominating sets; Roman domination.\\
\end{abstract}
\section{Introduction}

In all our exposition, we assume $G$ is a simple graph whose vertex set is $V(G)$ and its
edge set is $E(G)$ ($V$ and $E$ for more simplicity) such that  $G$ has no isolated vertex. The order of $G$ (or number of vertices of $G$) is given by $|V|$, and usually represented by $n=n(G)$. Given a vertex $v\in V(G)$, the {\em open neighborhood} of this vertex is the set $N_G(v)=N(v)=\{u\in V(G)\mid uv\in E(G)\}$. Moreover, the {\em closed neighborhood} of $v$ is the set $N_G[v]=N[v]=N(v)\cup \{v\}$. By using this, the {\em degree} of such vertex $v\in V$ in the graph $G$ can be written as $\deg_G(v)=d(v)=|N(v)|$. In concordance, the {\em maximum}
and {\em minimum degrees} of the graph $G$ are $\Delta=\Delta(G)$ and
$\delta=\delta(G)$, respectively. Now, for a fixed set of vertices $S$, the {\em open neighborhood} of such set $S\subseteq V$ is taken as $N_G(S)=N(S)=\cup_{v\in S}N(v)$, and the similarly, {\em closed neighborhood} of $S$ is $N_G[S]=N[S]=N(S)\cup S$. The {\em diameter} of $G$, which is denoted as $\diam(G)$, represents the maximum possible value among all the minimum distances between any pairs of vertices of the graph $G$. Now, concerning induced cycles in $G$, the {\em  girth} $g(G)$ of $G$ is taken as the length of one shortest cycle in $G$. We make the assumption that those graphs with no cycle (trees) have girth equal to $\infty$.

A vertex of degree one in $G$ it is said to be a {\em leaf} of $G$. Also, a vertex being adjacent to a leaf is said to be a {\em
support vertex}. Now, for a given support vertex $v\in V(G)$, by $L_v$ we represent the set of all that leaves which are adjacent to $v$.
The special case of a tree obtained from two stars $K_{1,p}$ and $K_{1,q}, (p\ge q\ge 1)$ by adding an edge between their central vertices is called a {\em double star} $DS_{p,q}$. The following terminology for a rooted tree $T$ is used: $C(v)$ represents the set of all
children of a vertex $v\in V(T)$; $D(v)$ is taken as the set of descendants of $v$; and in addition we consider the set $D[v]=D(v)\cup \{v\}$. On the other hand, the \emph{depth} of the vertex $v$, written ${\rm depth}(v)$, is the largest distance between such $v$ and other vertex belonging to the set $D(v)$. Finally, by the subtree $T_v$ of $T$, we mean such {\em maximal subtree} induced by $v$ and the descendants $D(v)$ of $v$ (that is the set subtree induced by $D[v]$).

Domination theory is an classical and yet very popular research topic on graph theory. The number of open problems and lines of investigation concerning domination grows each day, as well as the number on researchers dedicating time to this topic. There is much theoretical knowledge, but also, much applications to practical problems can be found in the literature. To see some background on domination in graphs we suggest the books \cite{hhs,hhs-2}. One common research line is nowadays related to study different variants of the standard domination concept. Maybe the most common variations are the total domination (see \cite{He09,HeYe_book}), the independent domination (see \cite{GH}) and the Roman domination (see \cite{CDH04}). Each one of these three mentioned variants has itself its own variants (which are variants of the standard domination also). In this work, we make a contribution to a kind of combination of two of these variants, namely the total and the Roman domination.

A given set $S$ of vertices of the graph $G$ is taken as a \textit{dominating set} of $G$ if every vertex of $G$ not in $S$ has a neighbor in $S$, or equivalently, if $N[S]=V$.
The \textit{domination number} of $G$, from now on denoted by $\gamma(G)$, is then the possible minimum cardinality among all the existent
dominating sets of $G$. In connection with this, by a $\gamma(G)$-set, we mean a dominating set having the minimum possible cardinality in
$G$. A vertex set $D$ in $G$ is an \emph{efficient dominating set} for $G$ if for every vertex $v \in V(G)$,
there is exactly one $u \in D$ dominating $v$ (a vertex of $D$ dominates itself).
The set of vertices $S$ is a {\em total dominating set}, abbreviated TD-set, if it satisfies that
$N(S)=V$ (every vertex of $G$ has a neighbor in $S$, including also the vertices of $S$). The {\em total domination number}, which is usually denoted by $\gamma_t(G)$,  is understood as the minimum cardinality among all total dominating sets that exist in $G$.

Roman dominating functions in graphs were first formally defined by Cockayne \emph{et al.} in \cite{CDH04}, and this was partly motivated by a work of Ian Stewart \cite{s}. The idea comes from the ancient Roman Empire and a strategy of securely protect the empire from external attacks. In the last recent years, there has been an ``explosion'' of research works concerning Roman dominating functions, and by now, this topic is very well studied in its standard way. The literature on this topic has been detailed in several surveys
\cite{cjsv, cjsv1, cjsv2, cjsv3, cjsv4}. 
However, there are still several ongoing works and open problems that are of high interest. Variations of the standard Roman domination are giving more insight into the classical problem, and new strategies of ``protecting a hypothetical Roman Empire'' are always of interest for the research community. As already mentioned, we are aimed now to continue contributing to the topic of Roman dominating functions in graphs.

A \emph{Roman dominating function} defined over the graph $G$, from now on RD-function for short,
is a map $f\colon V(G)\rightarrow \{0,1,2\}$ that satisfies the condition
that if $u$ is a vertex for which $f(u)=0$, then it must be adjacent to at least one other vertex $v$ such that $f(v)=2$. The \emph{weight} of the function $f$, written $\omega (f)$, is then $f(V(G))=\sum_{v\in V(G)}f(v)$. The \emph{Roman domination number}, which is commonly denoted by $\gamma _{R}(G)$, is the minimum weight among all RD-functions of the graph $G$. An RD-function with the minimum possible
weight $\gamma _{R}(G)$ in $G$ is called to be a $\gamma _{R}(G)$-\emph{function}.
For an RD-function $f$, we assume $V_{i}^{f}=\{v\in V(G)\colon f(v)=i\}$ for $i=0,1,2$. Observe that these three sets uniquely determine $f$, and so, it can be equivalently written $f=(V_{0}^{f},V_{1}^{f},V_{2}^{f})$. Also, note that $\omega(f)=|V_{1}^{f}|+2|V_{2}^{f}|$.

One of the most recent and interesting variations of Roman dominating functions in graphs, introduced by Liu and Chang in \cite{LiCh13} (although in a more general setting), is that one concerning the total Roman domination concept. That is, a \emph{total Roman dominating function} of a graph $G$ having no vertices of degree zero (or TRD-function for short), is a Roman dominating function $f$ on $G$, having an additional property which states that the subgraph of $G$ induced by the set of all vertices having a label with positive value under $f$, has minimum degree at least one. The \emph{total Roman domination number}, which is now on denoted by $\gamma _{tR}(G)$, is given as the minimum possible weight among every
TRD-functions on the graph $G$. A $\gamma _{tR}(G)$-function is taken as a TRD-function with weight $\gamma _{tR}(G)$. As already mentioned, the total Roman domination parameter was introduced by Liu and Chang \cite{LiCh13}, and studied for a more general albeit. Further on, specific studies on such parameters were developed for instance in the articles \cite{ah}-\cite{ah2-3}, \cite{ah3}-\cite{ah5}, \cite{ck,mky}. This leads to say that, nowadays, total Roman domination in graphs is relatively well studied.

The defensive strategy of Roman domination states that a vertex labeled with zero (unsecured place) must have at least a neighbor labeled with two (secured place). This means that if an unsecured position is attacked by one neighbor, then a secured (or stronger) neighbor could send one
of the two legions it possess, in order to defend this neighbor vertex from the performed attack. However, if there is secured place which has many neighbors labeled with zero, and a ``kind of simultaneous'' attack occurs, then this secured place cannot properly proceed. In order to deal with such situation, in \cite{S1} was introduced the idea of strong Roman dominating functions in graphs.

We now consider a map $f:V (G)\rightarrow \{0, 1,\ldots , \left\lceil \frac{\Delta}{2}\right\rceil+ 1\}$ that labels the vertices of $G$. Let $B_j=\{v\in V: f(v)=j\}$ for $j=0,1$ and let $B_2=V\setminus (B_0\cup B_1)=\{v\in V:f(v)\ge 2\}$
Then $f$ is a {\em strong Roman dominating function} on a graph $G$, abbreviated StRD-function, if any $v\in B_0$ has a neighbour $u$ for which $u\in B_2$ and such that $f(u)\geq 1+ \left\lceil \frac{1}{2}\vert N(u)\cap B_0\vert \right\rceil$.

Total Roman domination can be also seen as a kind of strategy of protection in which isolated elements are avoided. For the strong Roman domination, the ``weakness'' of protecting elements is dealt with. However, the isolated elements can still occur. It is then our goal to develop an protection strategy in which isolated elements are avoided as well as strong protection is considered. That is, a variant of Roman domination which we call total strong Roman domination.

A {\em total strong Roman dominating function}, abbreviated TStRD-function, represents a strong Roman dominating function satisfying also
that the set of all vertices having a label with positive value induce a graph with minimum degree at least one.
The minimum possible weight $\omega(f)=f(V)=\sum_{v\in V} f(v)$, among all the existent total strong Roman dominating functions $f$ of the graph $G$, is called the {\em total strong Roman domination number} of $G$ and is denoted by $\gamma_{StR}^t(G)$.
A TStRD-function $f$ is called a $\gamma^t_{StR}(G)$-function if $\omega(f)=\gamma^t_{StR}(G)$.

It is now our aim to initiate the study of this total and strong variant of the Roman domination number of graphs. We begin by establishing several tight upper bounds on the total strong Roman domination number of graphs concerning some graphs parameters and invariants.
Moreover, we relate this total and strong version of Roman domination to some other (standard or not) domination
parameters, like for example, the standard domination number, the total version of the domination number
and the strong Roman domination number.
In addition, we study the behavior of our parameter in question over the structure of a tree graph. We for instance prove that, for any given tree $T$ of order $n(T)\ge 3$, having maximum degree $\Delta(T)$ and $s(T)$ support vertices, it follows
$\gamma_{stR}^t(T)\ge \left\lceil \frac{n(T)+s(T)}{\Delta(T)}\right\rceil+1$.

We make use of the following results in this paper, some of which are straightforward to prove or to observe, and other ones are some known results.

\begin{observation}\label{1}
{\em For every graph $G$ with no isolated vertex, $\gamma_{tR}(G) \leq \gamma^t_{StR}(G) \leq \left\lceil \frac{\Delta+1}{2} \right\rceil\gamma_t(G)$.}
\end{observation}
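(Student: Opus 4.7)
The proof naturally splits into the two inequalities, which I would handle independently by explicit constructions.

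For the left inequality $\gamma_{tR}(G)\le \gamma^t_{StR}(G)$, the plan is to start with a $\gamma^t_{StR}(G)$-function $f$ and produce a total Roman dominating function $g$ of no larger weight by simply capping the labels at $2$: set $g(v)=f(v)$ when $f(v)\in\{0,1\}$ and $g(v)=2$ whenever $f(v)\ge 2$. Clearly $\omega(g)\le \omega(f)$. The two conditions to verify are routine. For the Roman property, any $v$ with $g(v)=0$ has $f(v)=0$, so the strong Roman condition on $f$ supplies a neighbor $u$ with $f(u)\ge 1+\lceil |N(u)\cap V_0^f|/2\rceil\ge 2$, hence $g(u)=2$. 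For the total property, the positive-label vertex sets of $f$ and $g$ coincide, so the induced subgraph with minimum degree at least one is the same.

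For the right inequality $\gamma^t_{StR}(G)\le \lceil(\Delta+1)/2\rceil\,\gamma_t(G)$, I would fix a $\gamma_t(G)$-set $D$ and define $f\colon V(G)\to\{0,1,\ldots,\lceil\Delta/2\rceil+1\}$ by $f(v)=\lceil(\Delta+1)/2\rceil$ for $v\in D$ and $f(v)=0$ elsewhere. Then $\omega(f)=\lceil(\Delta+1)/2\rceil\,\gamma_t(G)$, and both the dominating condition (every vertex outside $D$ has a $D$-neighbor) and the totality condition (every vertex of $D$ has a $D$-neighbor, since $D$ is total dominating) are inherited directly from $D$. The substantive verification is the strong Roman condition plus the check that the assigned value is a legal label.

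The parity analysis is where the main care is needed, and I would organize it as follows. First, a case split on the parity of $\Delta$ shows $\lceil(\Delta+1)/2\rceil\le \lceil\Delta/2\rceil+1$, so the label is admissible. Next, for any $u\in D$, the key observation is that since $D$ is total dominating, $u$ has at least one neighbor inside $D$, so $|N(u)\cap V_0^f|\le d(u)-1\le \Delta-1$; consequently
\[
1+\left\lceil \tfrac{1}{2}|N(u)\cap V_0^f|\right\rceil \le 1+\left\lceil \tfrac{\Delta-1}{2}\right\rceil = \left\lceil\tfrac{\Delta+1}{2}\right\rceil = f(u),
\]
which is exactly the strong Roman requirement at $u$. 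The subtle point, and therefore the main obstacle to anticipate, is that a naive assignment of the maximum label $\lceil\Delta/2\rceil+1$ would give a weaker bound; the sharper value $\lceil(\Delta+1)/2\rceil$ works precisely because totality of $D$ removes one potential zero-labeled neighbor from each protector's neighborhood. Once these checks are in place, $f$ is a TStRD-function and the inequality follows immediately from $\gamma^t_{StR}(G)\le \omega(f)$.
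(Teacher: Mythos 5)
Your proof is correct and follows essentially the approach the paper intends: Observation \ref{1} is stated without proof as a straightforward fact, and your construction for the right-hand inequality (assigning $1+\left\lceil\frac{\Delta-1}{2}\right\rceil=\left\lceil\frac{\Delta+1}{2}\right\rceil$ to the vertices of a $\gamma_t(G)$-set and $0$ elsewhere) is exactly the one the authors use later when characterizing equality in this bound, while your label-capping argument for the left inequality is the standard observation that a TStRD-function truncated at $2$ is a TRD-function. Your parity checks and the remark that totality of $D$ guarantees $|N(u)\cap B_0|\le\Delta-1$ are precisely the verifications needed, so nothing is missing.
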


\begin{observation}\label{o2}
{\em Let $G$ be a graph of order $n \geq 3$ with no isolated vertex. Then $$ 3 \leq \gamma^t_{StR}(G) \leq n.$$}
\end{observation}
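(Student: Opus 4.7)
The plan is to split the two inequalities and handle each directly. For the upper bound $\gamma_{StR}^t(G)\le n$, I would exhibit a single explicit TStRD-function: namely the constant function $f(v)=1$ on every $v\in V(G)$. Because no vertex is labelled $0$, the set $B_0$ is empty and the strong Roman condition is vacuously satisfied. The subgraph induced by the positively labelled vertices is $G$ itself, and since $G$ has no isolated vertex it has minimum degree at least one, verifying the totality requirement. Hence $f$ is a TStRD-function of weight $n$.

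For the lower bound $\gamma_{StR}^t(G)\ge 3$, I would suppose for contradiction that some TStRD-function $f$ satisfies $\omega(f)\le 2$ and split into cases by weight. Weights $0$ is immediately impossible (every vertex lies in $B_0$ and has no positively-labelled neighbour). Weight $1$ means exactly one vertex $v$ is in $B_1$ and the rest are in $B_0$; then the induced subgraph on $B_1\cup B_2$ is the single vertex $v$, so $v$ has degree zero there, contradicting totality.

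For weight $2$ there are two subcases. Either a single vertex $v$ carries label $2$ and every other vertex carries label $0$, or two distinct vertices $u,v$ each carry label $1$ and every other vertex carries label $0$. In the first subcase the vertex $v$ is again isolated in the subgraph induced by positively-labelled vertices, so totality fails. In the second subcase totality forces $u$ and $v$ to be adjacent, but because $n\ge 3$ there exists some third vertex $w\in B_0$; this $w$ has no neighbour in $B_2$ (since $B_2=\emptyset$), so the strong Roman condition at $w$ fails. All subcases yield a contradiction, proving $\gamma_{StR}^t(G)\ge 3$.

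The only delicate point in the argument is the weight-$2$ step with two vertices of label $1$: here it is essential to invoke the hypothesis $n\ge 3$ in order to produce the witness $w\in B_0$ that cannot be strongly dominated. Everything else is a direct check against the two defining properties of a TStRD-function.
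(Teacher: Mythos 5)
Your argument is correct and complete: the constant function $f\equiv 1$ gives the upper bound, and the exhaustive weight-$\le 2$ case analysis (using totality to kill the single-positive-vertex cases and the hypothesis $n\ge 3$ to kill the two-ones case) gives the lower bound. The paper states this as an observation without proof, and your write-up is exactly the routine verification it has in mind.
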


\begin{observation}\label{ab}
{\em Let $G$ be a connected graph of order at least three and let $f=(B_0, B_1, B_2)$ be a $\gamma^t_{StR}(G)$-function.
Then,
\begin{enumerate}
  \item $|B_2| \leq  |B_0|$.
  \item If $x$ is a leaf and $y$ a support vertex in $G$, then $x \notin B_2$
and $y \notin B_0$.
\end{enumerate}
}
\end{observation}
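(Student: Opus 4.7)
The plan is to prove each claim by contradiction, using the optimality of $f=(B_0,B_1,B_2)$ in the form of a small local modification that lowers the weight while preserving both the strong Roman condition and the total condition.

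For Part 1, I would introduce a witness map $\phi\colon B_0\to B_2$ assigning to each $v\in B_0$ a neighbour $\phi(v)\in N(v)\cap B_2$ satisfying $f(\phi(v))\ge 1+\lceil|N(\phi(v))\cap B_0|/2\rceil$; such a neighbour exists by the definition of a TStRD-function. Assume for contradiction that $|B_2|>|B_0|$. Then $\phi$ cannot be surjective and some $u\in B_2$ lies outside its image. Define $f'$ to agree with $f$ except $f'(u)=1$, and verify that $f'$ is still a TStRD-function: every $v\in B_0$ keeps the same witness $\phi(v)\ne u$, whose label and $|N(\phi(v))\cap B_0|$ are unchanged; and the total condition is preserved because the set of positively labelled vertices is the same under $f$ and $f'$. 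Since $f(u)\ge 2$, we get $\omega(f')\le\omega(f)-1$, contradicting $\omega(f)=\gamma_{StR}^t(G)$.

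For Part 2, let $x$ be a leaf with unique neighbour $y'$. To prove $x\notin B_2$, I would suppose $f(x)\ge 2$; the total condition at $x$ forces $f(y')\ge 1$, so $y'\notin B_0$. Lowering $f(x)$ to $1$ leaves the positive-label set unchanged (preserving the total condition) and does not break any strong Roman witness, since the only vertex that could have used $x$ as witness is $y'$, and $y'\notin B_0$. The resulting function is a TStRD-function of weight at most $\omega(f)-1$, contradicting optimality. For $y\notin B_0$ when $y$ is a support vertex, pick any leaf neighbour $x'$ of $y$ and suppose $f(y)=0$. By the case just proved, $f(x')\le 1$; if $f(x')=0$ then the strong Roman condition fails at $x'$ (whose only neighbour $y$ has $f(y)=0$), and if $f(x')=1$ then the total condition fails at $x'$ for the same reason. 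All options lead to a contradiction.

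The only delicate step is Part 1, where one must pick $\phi$ carefully and choose to downgrade $u$ to $1$ rather than to $0$: downgrading to $0$ could potentially cut off some neighbour of $u$ in the positive-label subgraph or force some $v\in B_0$ adjacent to $u$ to lose a crucial witness. Parts 2a and 2b then reduce to straightforward local case-checks at the leaf and its support vertex.
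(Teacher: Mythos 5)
Your proof is correct; the paper states this as an observation without proof (grouping it among results that are ``straightforward to prove or to observe''), and your local-modification arguments — reassigning no witness to an unused $u\in B_2$ before lowering its label to $1$, and the two-case check at a leaf and its support vertex — are exactly the intended routine verification.
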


\begin{prelem}\label{new1}
{\em (\cite{S1}) For any connected graph $G$ with $\Delta \leq 2$,
$\gamma_{StR}(G) = \gamma_{R}(G)$.}
\end{prelem}

\begin{observation}\label{o3}
{\em For any connected graph $G$ with $\Delta \leq 3$,
$\gamma^t_{StR}(G) = \gamma_{tR}(G)$.}
\end{observation}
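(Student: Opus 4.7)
The plan is to establish the two inequalities separately. The lower bound $\gamma_{tR}(G)\le \gamma^t_{StR}(G)$ comes for free from Observation~\ref{1}, so the substance of the argument is to show $\gamma^t_{StR}(G)\le \gamma_{tR}(G)$ when $\Delta\le 3$. The natural strategy is to take an arbitrary $\gamma_{tR}(G)$-function $f=(V_0,V_1,V_2)$ and verify that, under the degree restriction, $f$ is automatically a TStRD-function; since $f$ already uses only labels in $\{0,1,2\}\subseteq\{0,1,\ldots,\lceil\Delta/2\rceil+1\}$ and already satisfies the ``total'' requirement that the subgraph induced by $V_1\cup V_2$ has no isolated vertex, only the \emph{strong} condition needs to be checked.

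The key observation is the following. Let $v\in V_0$. Because $f$ is a TRD-function, $v$ has some neighbour $u\in V_2$. To certify the strong condition at $v$ through this same $u$, I need $f(u)\ge 1+\lceil |N(u)\cap V_0|/2\rceil$, that is $2\ge 1+\lceil |N(u)\cap V_0|/2\rceil$, which holds precisely when $|N(u)\cap V_0|\le 2$. Now $u\in V_2$ and, by totality of $f$, $u$ must have at least one neighbour in $V_1\cup V_2$; therefore $|N(u)\cap V_0|\le \deg(u)-1\le \Delta-1\le 2$, where the last inequality uses the hypothesis $\Delta\le 3$. Hence the strong condition is satisfied at every vertex of $V_0$, and $f$ is indeed a TStRD-function of the same weight, giving $\gamma^t_{StR}(G)\le \omega(f)=\gamma_{tR}(G)$.

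Combining this with Observation~\ref{1} yields equality. There is no real obstacle here: the whole argument reduces to the one-line arithmetic fact that for $\Delta\le 3$, a vertex of $V_2$ in any TRD-function can have at most two neighbours in $V_0$, which is exactly the threshold for the ``strong'' ceiling $\lceil\cdot/2\rceil$ to stay at $1$. I would present the proof simply as this verification.
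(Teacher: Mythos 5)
Your proof is correct and is exactly the straightforward verification the paper has in mind (it states this as an observation without giving a proof): the lower bound comes from Observation~\ref{1}, and for the upper bound the totality of a $\gamma_{tR}(G)$-function forces every vertex labelled $2$ to have at most $\Delta-1\le 2$ neighbours labelled $0$, so the strong condition holds automatically. Nothing further is needed.
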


\begin{prelem}\label{path}
{\em (\cite{C}) For any path $P_n$ and any cycle $C_n$, $\gamma_{R}(P_n)=\gamma_{R}(C_n)=\left\lceil \frac{2n}{3}\right\rceil$.}
\end{prelem}

\begin{prelem}\label{path2}
{\em (\cite{ah}) If $G$ is a nontrivial path or a cycle on $n$ vertices, then {\color{black}$\gamma_{tR}(G)=n$.}}
\end{prelem}

Let $\mathcal G$ be such family of graphs that can be obtained as follows. We begin with a 4-cycle $(v_1v_2v_3v_4)$. We next add $k_1+k_2\ge 1$ different paths $P_2$ (that is with vertex disjoint sets), and join $v_1$ to the end of $k_1$ paths of them, and also join $v_2$ to the end of
the remaining $k_2$ paths (it can possibly be, $k_1 = 0$ or $k_2 = 0$). Moreover, we consider the family $\mathcal H$ containing all the graphs that can be constructed from a double star by making one subdivision of each pendant edge of the double star, and also making a subdivision of the the non-pendant edge with $r \geq 0 $ vertices.

This family of graphs has been introduced in \cite{ah}.
\begin{prelem}\label{ah}
{\em (\cite{ah}) Let $G$ be a connected graph of order $n$. Then $\gamma_{tR}(G)=n$, if and only if one of the following holds.
\begin{enumerate}
\item[1.]
$G$ is a path or a cycle.
\item[2.]
$G$ is a corona, $cor(F $), of some graph $F$.
\item[3.]
 $G$ is a subdivided star.
\item[4.]
$G \in \mathcal{G} \cup \mathcal{H}$.
\end{enumerate}
}
\end{prelem}
\begin{prelem}\label{th4}
{\em (\cite{ah}) If $G$ is a graph with no isolated vertex, then
$\gamma_t(G)=\gamma_{tR}(G)$ if and only if $G$ is the disjoint union of copies of $K_2$.}
\end{prelem}

\begin{prelem}\label{th5}
{\em (\cite{ah}) Let $G$ be a connected graph of order $n \geq 3$. Then, $\gamma_{tR}(G)=\gamma_t (G)+1$ if and only if $\Delta(G)=n-1$.}
\end{prelem}

\begin{prelem}\label{th6}
{\em Let $G$ be a graph of minimum degree at least one. Then, (a): \cite{ore} $\gamma(G) \leq |V(G)|/2$ and,
(b): \cite{fjkr, px}  $\gamma(G) = |V(G)|/2$
 if and only if the components of $G$ are a cycle $C_4$ or the corona $H\circ K_1$ of any connected graph $H$.}
 \end{prelem}

\section{Bounds on the total strong Roman domination number}

The main goal of this section concerns finding a few interesting closed bounds for the total strong Roman domination number of graphs, in which we relate it to other parameters or invariants of the graph. To this end, we need the following concepts. A \emph{matching} (also understood as an independent edge set) in a graph $G$ is formed by a set of edges having no vertices in common. The \emph{matching number} of the graph $G$ is the maximum cardinality among all possible matchings in $G$, which we denote by $\alpha'(G)$.

\begin{theorem}\label{S}
{\em Let  $G$ be a graph of order $n\ge 4$, maximum degree $\Delta$, and without isolated vertices and different from a star. Then,
$$\gamma^t_{StR}(G)\le n-\Delta+\alpha'(G)\left\lceil \frac{\Delta-1}{2}\right\rceil.$$}
\end{theorem}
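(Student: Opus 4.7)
The plan is to exhibit a specific total strong Roman dominating function $f$ on $G$ and bound its weight. Let $v\in V(G)$ satisfy $\deg_G(v)=\Delta$, and write $N(v)=\{u_1,\ldots,u_\Delta\}$. Since $G$ has no isolated vertices, $n\ge 4$ and $G$ is not a star, the matching number satisfies $\alpha'(G)\ge 2$: indeed, the only graphs with no isolated vertex and matching number one are the stars and the triangle $K_3$, both excluded here. I will also assume $\Delta\ge 2$ (so that $\lceil(\Delta-1)/2\rceil\ge 1$), reducing to the connected case by processing the components of $G$ separately.

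My baseline function is
\[
f(v)=1+\Big\lceil\tfrac{\Delta-1}{2}\Big\rceil,\qquad f(u_1)=1,\qquad f(u_i)=0 \text{ for } 2\le i\le\Delta,
\]
together with $f(x)=1$ for every $x\in V\setminus N[v]$, where the distinguished neighbour $u_1$ is chosen carefully (see below). A direct computation gives
\[
\omega(f)=\Big(1+\Big\lceil\tfrac{\Delta-1}{2}\Big\rceil\Big)+1+0\cdot(\Delta-1)+(n-\Delta-1)=n-\Delta+1+\Big\lceil\tfrac{\Delta-1}{2}\Big\rceil.
\]
The strong-Roman axiom is automatic: the $0$-labelled vertices are exactly the $\Delta-1$ elements of $N(v)\setminus\{u_1\}$, all adjacent to $v$, and by construction $f(v)=1+\lceil(\Delta-1)/2\rceil=1+\lceil|N(v)\cap B_0|/2\rceil$.

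The main technical point is the totality condition. The vertices $v$ and $u_1$ are positive and mutually adjacent, so they are already totally dominated. The only possible obstruction comes from some $x\in V\setminus N[v]$ with $f(x)=1$ whose neighbourhood in $G$ lies entirely inside $N(v)\setminus\{u_1\}$. I plan to choose $u_1$ so as to eliminate such offending $x$'s whenever possible: in particular, if some $u_i$ is the support of a leaf lying outside $N[v]$, I will select $u_1=u_i$ to cover that leaf. When no single choice of $u_1$ suffices, I will locally modify $f$ by promoting the troublesome $u_i$'s from $0$ to $1$ (or to $2$ if the $u_i$ itself must act as a strong Roman dominator of its own leaf). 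Every such local fix costs at most one extra unit of weight but simultaneously reduces $|N(v)\cap B_0|$, which relaxes by one the lower bound required at $v$ whenever $\Delta$ is even.

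To conclude, the weight of the baseline function already satisfies
\[
n-\Delta+1+\Big\lceil\tfrac{\Delta-1}{2}\Big\rceil\;\le\; n-\Delta+\alpha'(G)\Big\lceil\tfrac{\Delta-1}{2}\Big\rceil,
\]
since $1\le(\alpha'(G)-1)\lceil(\Delta-1)/2\rceil$ under our assumptions $\alpha'(G)\ge 2$ and $\lceil(\Delta-1)/2\rceil\ge 1$. The slack $(\alpha'(G)-1)\lceil(\Delta-1)/2\rceil-1$ in this inequality is precisely what absorbs the local modifications needed to enforce totality. The hardest part of the argument, in my view, is the case analysis ensuring that the required totality-fixes always fit inside this slack; once this is verified the desired upper bound follows.
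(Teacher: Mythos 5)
Your construction is the right kind of object, and your baseline case (no totality repairs needed) matches the paper's argument for the case where $G[V\setminus N[v]]$ has no isolated vertex. But there is a genuine gap, and it sits exactly where you flag ``the hardest part'': you never bound the \emph{number} of neighbours of $v$ that must be promoted. A priori, the vertices $x\in V\setminus N[v]$ whose entire neighbourhood lies in $N(v)$ could force you to promote up to $\Delta-1$ of the $u_i$'s, and each promotion has net cost at least $\tfrac{1}{2}$ even after the reduction of $f(v)$; a slack of $(\alpha'(G)-1)\left\lceil\frac{\Delta-1}{2}\right\rceil-1$ cannot absorb this when $\alpha'(G)$ is small compared to $\Delta$. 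The missing idea — and the only nontrivial combinatorial content of the paper's proof — is that if $S$ denotes the isolated vertices of $G[V\setminus N[v]]$ and $S'\subseteq N(v)$ is a \emph{minimal} set dominating $S$, then each $u'\in S'$ has a private neighbour $u\in S$, and the edges $uu'$ form a matching; hence $|S'|\le\alpha'(G)$, so the number of promotions (taking $u_1\in S'$) is at most $\alpha'(G)-1$. Even with that, your accounting is tight: one must also add the edge $vz$ (for $z\in N(v)\setminus S'$) and/or an edge of $G[X\setminus S]$ to the matching to get $|S'|\le\alpha'(G)-1$ or $\alpha'(G)-2$ in the subcases $S'\subsetneqq N(v)$ and $S\subsetneqq X$, which is what rescues the borderline cases $\Delta\in\{2,3\}$ where $\left\lceil\frac{\Delta-1}{2}\right\rceil=1$. (Check, e.g., $\Delta=3$, $|S'|=2$: your promoted function has weight $n-\Delta+3$, and you need $\alpha'(G)\ge 3$, not merely $\ge 2$, to meet the bound.)

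Two smaller remarks. First, once these matching bounds are in hand, your lighter repair (promoting the offending $u_i$'s only to $1$ and lowering $f(v)$ to $1+\left\lceil\frac{\Delta-|S'|}{2}\right\rceil$) does verify the inequality $|S'|+\left\lceil\frac{\Delta-|S'|}{2}\right\rceil\le\alpha'(G)\left\lceil\frac{\Delta-1}{2}\right\rceil$ in all cases, so your route would then be a legitimate (and slightly more uniform) alternative to the paper's, which instead assigns the heavy label $\left\lceil\frac{\Delta-1}{2}\right\rceil+1$ to every vertex of $S'$ and label $0$ to all of $S$. Second, the parenthetical about promoting a $u_i$ ``to $2$ if it must act as a strong Roman dominator of its own leaf'' is vacuous in your setup, since every vertex outside $N[v]$ already carries label $1$ and needs only totality, never Roman protection.
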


\begin{proof}
We note that $\alpha'(G)\ge 2$ because $G$ is not a star. Let $v$ be a vertex of maximum degree $\Delta$ and let $X=V(G)\setminus N_{G}[v]$. If $X=\emptyset$, then clearly $v$ has degree $\Delta=n-1$ and also, it is satisfied $\gamma^t_{StR}(G)\leq\left\lceil \frac{\Delta-1}{2}\right\rceil+2$.  Hence,
$$\gamma^t_{StR}(G)\le 1+\left\lceil\frac{\Delta-1}{2}\right\rceil+1= n-\Delta+\left\lceil \frac{\Delta-1}{2}\right\rceil+1\le n-\Delta+\alpha'(G)\left\lceil \frac{\Delta-1}{2}\right\rceil,$$ which gives the desired bound.

Assume now that $X\neq \emptyset$ and that $S$ is the set consisting of all isolated vertices of the subgraph induced by $X$, now on denoted $G[X]$. If $S=\emptyset$, then let $u\in N(v)$ and define $f:V(G)\rightarrow \{0,1, \ldots, \left\lceil\frac{\Delta}{2}\right\rceil+1\}$ by $f(v)=\left\lceil \frac{\Delta-1}{2}\right\rceil+1$, $f(u)=1$, $f(x)=1$ for every $x\in X$, and $f(y)=0$ otherwise. Since $S$ is empty, we notice that $f$ is a TStRD-function of $G$ of weight $n-\Delta+\left\lceil \frac{\Delta-1}{2}\right\rceil+1$ yielding $$\gamma^t_{StR}(G)\le n-\Delta+\left\lceil \frac{\Delta-1}{2}\right\rceil+1\le n-\Delta+\alpha'(G)\left\lceil \frac{\Delta-1}{2}\right\rceil.$$

Let $S\neq \emptyset$. Since $\delta \ge 1$, every vertex $s\in S$ is adjacent to at least one vertex of $N(v)$. Let $S'$ be the smallest subset of $N(v)$ such that every vertex in $S$ is adjacent to a vertex of $S'$. By the choice of $S'$, each vertex $u'\in S'$ has a private neighbor $u\in S$ with respect to $S'$, and so $|S'|\le |S|$. Let $M=\{uu'\,:\,u'\in S'\mbox{ and } u\in S \mbox{ is a private neighbor of } u'\}$. Obviously, $M$ is a matching in $G$ yielding $|S'|\le \alpha'(G)$. We now consider four cases.

\smallskip
\noindent {\bf Case 1.} $S'=N(v)$ and $S=X$.\\ Hence, the function $f:V(G)\rightarrow \{0,1, \ldots, \left\lceil\frac{\Delta}{2}\right\rceil+1\}$  defined as $f(v)=1$, $f(x)=\left\lceil \frac{\Delta-1}{2}\right\rceil+1$ for every $x\in S'$ and $f(x)=0$ otherwise, is a TStRD-function of $G$ and so,
\begin{align}
\gamma^t_{StR}(G)&\le \left(\left\lceil \frac{\Delta-1}{2}\right\rceil+1\right)|S'|+1 = 1+|S'|+\left(\left\lceil \frac{\Delta-1}{2}\right\rceil\right)|S'|\nonumber \\
&\leq 1+|S|+\left(\left\lceil \frac{\Delta-1}{2}\right\rceil\right)|S'| \leq n-\Delta+\left(\left\lceil \frac{\Delta-1}{2}\right\rceil\right)|S'|\nonumber \\
&\leq n-\Delta+\alpha'(G)\left\lceil \frac{\Delta-1}{2}\right\rceil.\label{proof-1}
\end{align}
%

\noindent {\bf Case 2.} $S'=N(v)$ and $S\subsetneqq X$.\\ Assume that $ww'\in E(G[X-S])$. Then $M\cup \{ww'\}$ is a matching of $G$ and so, $|S'|\le \alpha'(G)-1$. Define the function $f:V(G)\rightarrow \{0,1, \ldots, \left\lceil\frac{\Delta}{2}\right\rceil+1\}$ as $f(v)=1$, $f(x)=\left\lceil \frac{\Delta-1}{2}\right\rceil+1$ for any $x\in S'$, $f(x)=1$ for every $x\in X-S$, and $f(x)=0$ otherwise. Clearly, $f$ is a TStRD-function of $G$, and this implies that
\begin{align*}
\gamma^t_{StR}(G)& \leq \left(\left\lceil \frac{\Delta-1}{2}\right\rceil+1\right)|S'|+1+n-1-\Delta-|S|\\
&\leq n-\Delta+\left(\left\lceil \frac{\Delta-1}{2}\right\rceil\right)|S'|\\
&\leq n-\Delta+\left(\left\lceil \frac{\Delta-1}{2}\right\rceil\right)(\alpha'(G)-1)\\
&<n-\Delta+\alpha'(G)\left\lceil \frac{\Delta-1}{2}\right\rceil.
\end{align*}

\smallskip
\noindent {\bf Case 3.} $S'\subsetneqq N(v)$ and $S=X$.\\
Let $z\in N(v)-S'$. Hence, $M\cup \{vz\}$ is a matching of $G$ and so, $|S'|\le \alpha'(G)-1$.  We consider the function $f:V(G)\rightarrow \{0,1, \ldots, \left\lceil\frac{\Delta}{2}\right\rceil+1\}$ defined as $f(v)= 1+\left\lceil \frac{\Delta-|S'|}{2}\right\rceil$, $f(x)=\left\lceil \frac{\Delta-1}{2}\right\rceil+1$ for every $x\in S'$, and $f(x)=0$ otherwise. Note that $f$ is a TStRD-function of $G$ and so,
\begin{align}
 \gamma^t_{StR}(G) &\le 1+\left\lceil \frac{\Delta-|S'|}{2}\right\rceil+\left(\left\lceil \frac{\Delta-1}{2}\right\rceil+1\right)|S'|\nonumber \\
 &=1+|S'|+\left\lceil \frac{\Delta-|S'|}{2}\right\rceil+\left\lceil \frac{\Delta-1}{2}\right\rceil|S'|\nonumber\\
 & \leq 1+|S|+\left\lceil \frac{\Delta-|S'|}{2}\right\rceil+\left\lceil \frac{\Delta-1}{2}\right\rceil( \alpha'(G)-1)\nonumber \\
 &\leq n-\Delta+\left\lceil \frac{\Delta-1}{2}\right\rceil\alpha'(G).\label{proof-2}
\end{align}
\smallskip
\noindent {\bf Case 4.} $S'\subsetneqq N(v)$ and $S\subsetneqq X$.\\
Suppose that $z\in N(v)-S'$ and that $uu'\in E(G[X-S])$. Then clearly $M\cup \{vz,uu'\}$ is a matching of $G$ and so, we have $|S'|\le \alpha'(G)-2$. Define the function $f:V(G)\rightarrow \{0,1, \ldots, \left\lceil\frac{\Delta}{2}\right\rceil+1\}$ by $f(v)=\left\lceil \frac{\Delta-|S'|}{2}\right\rceil+1$, $f(x)=\left\lceil \frac{\Delta-1}{2}\right\rceil+1$ if $x\in S'$, $f(x)=1$ if $x\in X-S$ and $f(x)=0$ otherwise. It is easy to see that $f$ is a TStRD-function of $G$. Thus,
\begin{align*}
 \gamma^t_{StR}(G) &\le n-1-\Delta-|S|+1+\left\lceil \frac{\Delta-|S'|}{2}\right\rceil+\left(\left\lceil \frac{\Delta-1}{2}\right\rceil+1\right)|S'|\\
 &\leq n-\Delta+\left\lceil \frac{\Delta-|S'|}{2}\right\rceil+\left(\left\lceil \frac{\Delta-1}{2}\right\rceil\right)|S'|\\
&\leq n-\Delta+\left\lceil \frac{\Delta-|S'|}{2}\right\rceil+\left(\left\lceil \frac{\Delta-1}{2}\right\rceil\right)(\alpha'(G)-2)\\
 &<n-\Delta+\alpha'(G)\left\lceil \frac{\Delta-1}{2}\right\rceil,
\end{align*}
and the proof is complete.
\end{proof}

We next characterize the graphs attaining the aforementioned bound among those graphs having girth at least four. To this end, we need the set of graphs appearing in Fig. \ref{fig1}. Also, for a given graph $G$, by $S(G)$ we mean a graph obtained from $G$ by subdividing all its edges.

\begin{figure}[ht]
\def\emline#1#2#3#4#5#6{%
\put(#1,#2){\special{em:moveto}}
\put(#4,#5){\special{em:lineto}}}
\def\newpic#1{}
\begin{center} \unitlength 0.9mm
\begin{tikzpicture}[line cap=round,line join=round,>=triangle 45,x=1.0cm,y=1.0cm]
\clip(2.75,-0.34) rectangle (11.33,3.74);
\draw (4,2.5)-- (3,3.5);
\draw (5,3.5)-- (4,3.5);
\draw (3,3.5)-- (3,2.5);
\draw (5,3.5)-- (5,2.5);
\draw (3,3.5)-- (4,3.5);
\draw (3.95,2.39) node[anchor=north west] {$F_1$};
\draw (7,2.5)-- (6,3.5);
\draw (8,3.5)-- (7,3.5);
\draw (6,3.5)-- (6,2.5);
\draw (8,3.5)-- (8,2.5);
\draw (6,3.5)-- (7,3.5);
\draw (6.95,2.37) node[anchor=north west] {$F_2$};
\draw (7,2.5)-- (8,3.5);
\draw (9,2.5)-- (9,3.5);
\draw (11,3.5)-- (10,3.5);
\draw (9,3.5)-- (10,2.5);
\draw (11,3.5)-- (11,2.5);
\draw (9,3.5)-- (10,3.5);
\draw (9.95,2.37) node[anchor=north west] {$F_3$};
\draw (9,2.5)-- (11,3.5);
\draw (11,3.5)-- (10,2.5);
\draw (7.5,1)-- (8.5,1.5);
\draw (8.5,1.5)-- (9.5,1);
\draw (8.5,1)-- (8.5,0.5);
\draw (8.42,0.25) node[anchor=north west] {$F_5$};
\draw (8.5,1.5)-- (8.5,1);
\draw (8.5,0.5)-- (7.5,1);
\draw (8.5,0.5)-- (9.5,1);
\draw (5.5,0.5)-- (5.5,1.5);
\draw (5.5,1.5)-- (4.5,0.5);
\draw (6.5,1.5)-- (6.5,0.5);
\draw (5.9,0.26) node[anchor=north west] {$F_4$};
\draw (5.5,1.5)-- (6.5,1.5);
\draw (5.5,0.5)-- (6.5,0.5);
\begin{scriptsize}
\fill [color=black] (3,3.5) circle (1.5pt);
\fill [color=black] (4,3.5) circle (1.5pt);
\fill [color=black] (4,2.5) circle (1.5pt);
\fill [color=black] (5,3.5) circle (1.5pt);
\fill [color=black] (3,2.5) circle (1.5pt);
\fill [color=black] (5,2.5) circle (1.5pt);
\fill [color=black] (6,3.5) circle (1.5pt);
\fill [color=black] (7,3.5) circle (1.5pt);
\fill [color=black] (7,2.5) circle (1.5pt);
\fill [color=black] (8,3.5) circle (1.5pt);
\fill [color=black] (6,2.5) circle (1.5pt);
\fill [color=black] (8,2.5) circle (1.5pt);
\fill [color=black] (9,3.5) circle (1.5pt);
\fill [color=black] (10,3.5) circle (1.5pt);
\fill [color=black] (9,2.5) circle (1.5pt);
\fill [color=black] (11,3.5) circle (1.5pt);
\fill [color=black] (10,2.5) circle (1.5pt);
\fill [color=black] (11,2.5) circle (1.5pt);
\fill [color=black] (8.5,1.5) circle (1.5pt);
\fill [color=black] (7.5,1) circle (1.5pt);
\fill [color=black] (8.5,1) circle (1.5pt);
\fill [color=black] (9.5,1) circle (1.5pt);
\fill [color=black] (8.5,0.5) circle (1.5pt);
\fill [color=black] (5.5,1.5) circle (1.5pt);
\fill [color=black] (5.5,0.5) circle (1.5pt);
\fill [color=black] (6.5,1.5) circle (1.5pt);
\fill [color=black] (4.5,0.5) circle (1.5pt);
\fill [color=black] (6.5,0.5) circle (1.5pt);
\end{scriptsize}
\end{tikzpicture}
\end{center}
\vspace{-0.7 cm} \caption{The graphs $F_1,F_2,F_3,F_4$ and $F_5$} \label{fig1}
\end{figure}

\begin{theorem}\label{girth}
{\em Let  $G$ be a connected graph of order $n$, maximum degree $\Delta$, and such that $g(G)\ge 4$. Then,
$\gamma^t_{StR}(G)= n-\Delta+\alpha'(G)\left\lceil \frac{\Delta-1}{2}\right\rceil$ if and only if $G$ is one of the graphs in the set $\{P_4,P_5,C_4,C_5,DS_{1,2},S(K_{1,3}),{\color{black}F_1,F_2,F_3,F_4,F_5}\}$.
}
\end{theorem}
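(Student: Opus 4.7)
I would split the proof into the ``if'' and ``only if'' directions.

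For the ``if'' direction, each of the eleven graphs in the list has small order, so I verify the equality by direct computation: calculate $n$, $\Delta$, and $\alpha'(G)$, exhibit an explicit TStRD-function whose weight equals $n-\Delta+\alpha'(G)\lceil (\Delta-1)/2\rceil$, and confirm this weight is optimal. Observation \ref{o3} is decisive here because every listed graph with $\Delta\le 3$ satisfies $\gamma^t_{StR}(G)=\gamma_{tR}(G)$, so for $P_4,P_5,C_4,C_5$ the equality follows from Theorem \ref{path2}, while for $DS_{1,2}$, $S(K_{1,3})$, and the $F_i$'s of maximum degree at most three, one combines Theorem \ref{ah} with a short ad hoc computation. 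Any listed graph with $\Delta\ge 4$ is then handled by producing an explicit extremal TStRD-function and a matching lower bound by case analysis on the structure of $B_0$ and $B_2$.

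For the ``only if'' direction, I would let $G$ be a connected graph with $g(G)\ge 4$ achieving the bound with equality and retrace the four cases from the proof of Theorem \ref{S}, demanding equality at every inequality. Letting $v$ be a vertex of maximum degree, $X=V(G)\setminus N[v]$, $S$ the set of isolated vertices of $G[X]$, and $S'\subseteq N(v)$ a minimum subset dominating $S$, tightness in the chains \eqref{proof-1} and \eqref{proof-2} and their analogues in the other two cases forces three strong structural conditions: (i) the matching $M$ between $S'$ and its private neighbors in $S$ is a maximum matching of $G$, so $|S'|=\alpha'(G)$; (ii) every vertex of $N(v)\setminus S'$ is a leaf attached to $v$; (iii) $X\setminus S$, if nonempty, is paired by edges of $M$. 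The assumption $g(G)\ge 4$ guarantees that $N(v)$ is independent, which rigidly pins the admissible local structure around $v$.

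The main obstacle is converting these local constraints into the finite list. The plan is to split on $\Delta$. For $\Delta\le 3$, Observation \ref{o3} reduces the identity to $\gamma_{tR}(G)=n-\Delta+\alpha'(G)$: when $\Delta=2$, Theorem \ref{path2} yields $\alpha'(G)=2$ and the standard characterization forces $G\in\{P_4,P_5,C_4,C_5\}$; when $\Delta=3$, a careful combination of Theorems \ref{ah} and \ref{th5} with the girth hypothesis narrows $G$ to the corresponding small graphs in the list. For $\Delta\ge 4$, the multiplicative factor $\lceil (\Delta-1)/2\rceil\ge 2$ amplifies any slack, so the tightness conditions combined with the independence of $N(v)$ either determine $G$ to be one of the $F_i$'s or produce a strictly better TStRD-function, yielding a contradiction. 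The delicate part is ensuring that no infinite family of extremal graphs escapes the enumeration; this requires careful accounting of how vertices of $X\setminus S$ contribute to the weight versus to the matching size $\alpha'(G)$.
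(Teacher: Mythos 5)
Your overall strategy for the ``only if'' direction --- forcing equality throughout the inequality chains in the proof of Theorem \ref{S} and enumerating the resulting structures --- is exactly the paper's approach, and your bounding of $\Delta$ and $\alpha'(G)$ (Observation \ref{o2} gives $\alpha'(G)\lceil (\Delta-1)/2\rceil\le\Delta$, hence $\alpha'(G)=2$ or $\alpha'(G)=\Delta=3$) is also what the paper does. However, the three ``forced structural conditions'' you extract are not what tightness actually forces, and following them would derail the enumeration. Condition (ii) fails already for $C_4$, $F_2$, $F_3$, $F_4$ and $F_5$: there the vertices of $N(v)\setminus S'$ have neighbors in $X$ and are not leaves (the hypothesis $g(G)\ge 4$ makes $N(v)$ independent but says nothing about edges from $N(v)$ into $X$). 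Condition (iii) is backwards: $M$ consists of edges between $S'$ and $S$, so it cannot pair vertices of $X\setminus S$; what equality actually forces is that the cases $S\subsetneqq X$ (Cases 2 and 4 of Theorem \ref{S}) are impossible because they yield strict inequality, i.e.\ $S\ne\emptyset$ implies $S=X$. And (i) holds only when $S'=N(v)$; when $S'\subsetneqq N(v)$ equality forces $|S'|=\alpha'(G)-1$, since $M\cup\{vz\}$ with $z\in N(v)\setminus S'$ is a strictly larger matching.

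Two further gaps. First, the graphs $F_1,F_2,F_3$ (and one occurrence of $P_5,C_5$) arise from the subcase $S=\emptyset$, which Theorem \ref{S} disposes of \emph{before} its four cases begin; retracing only ``the four cases'' will never produce them, and this subcase needs its own argument (the paper shows $G[X]$ must be a small star attached to $N(v)$ and then reads off $F_1,F_2,F_3$). Second, for $\Delta=3$ and $\alpha'(G)=2$ your plan reduces the problem to $\gamma_{tR}(G)=n-1$, but neither Theorem \ref{ah} (which characterizes $\gamma_{tR}=n$) nor Theorem \ref{th5} (which characterizes $\gamma_{tR}=\gamma_t+1$) identifies that class, so the cited results do not close this subcase; the paper instead argues directly on the structure of $X$, $S$ and $S'$. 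A small point on the ``if'' direction: every graph in the list has $\Delta\le 3$, so your $\Delta\ge 4$ case is vacuous, and the paper leaves the eleven verifications implicit.
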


\begin{proof}
Assume that $\gamma^t_{StR}(G)= n-\Delta+\alpha'(G)\left\lceil \frac{\Delta-1}{2}\right\rceil$. By Observation \ref{o2}, we have $n-\Delta+\alpha'(G)\left\lceil \frac{\Delta-1}{2}\right\rceil\le n$. First observe that if $\Delta=2$ or $\Delta\ge 4$, then $\alpha'(G)\le 2$. Also, if $\Delta=3$, then $\alpha'(G)\le 3$. If $\alpha'(G)=1$, then $G$ is a star and we get a contradiction.

Hence, we may assume that $\alpha'(G)=2$ or $\alpha'(G)=\Delta=3$. Let $v$ be a vertex of maximum degree $\Delta$, and let $X$, $S$ and $S'$ denote the sets previously defined in the proof of Theorem \ref{S}. Since $g(G)\ge 4$, we have $X\neq \emptyset$. First let $S=\emptyset$. Then the function  $f:V(G)\rightarrow \{0,1, \ldots, \left\lceil\frac{\Delta}{2}\right\rceil+1\}$ {\color{black}defined} by $f(v)=\left\lceil \frac{\Delta-1}{2}\right\rceil+1, f(u)=1, f(x)=1$ for $x\in X$ and $f(x)=0$ otherwise, is a TStRD-function of $G$ of weight $n-\Delta+\left\lceil \frac{\Delta-1}{2}\right\rceil+1$. Then
$n-\Delta+\alpha'(G)\left\lceil \frac{\Delta-1}{2}\right\rceil\le n-\Delta+\left\lceil \frac{\Delta-1}{2}\right\rceil+1$.
If $\alpha'(G)=\Delta=3$, then
$$n=n-3+3\left\lceil \frac{3-1}{2}\right\rceil\le n-3+\left\lceil \frac{3-1}{2}\right\rceil+1=n-1$$
which is a contradiction. Assume that $\alpha'(G)=2$.
It then follows that $n-\Delta+2\left\lceil \frac{\Delta-1}{2}\right\rceil\le n-\Delta+\left\lceil \frac{\Delta-1}{2}\right\rceil+1$, yielding $\Delta=2$ or $\Delta=3$. If $\Delta=2$, then, from Observation \ref{o3} and Theorem \ref{path2}, we deduce that $G \in \{P_5,C_5\}$.
Let $\Delta=3$. We deduce from $\alpha'(G)=2$ and $S=\emptyset$, that $G[X]$ is a star $K_{1,t}$ whose central vertex, say $z$, is adjacent to a neighbor of $v$, say $u$. Since $\Delta=3$, we have $t\in \{1,2\}$. If $t=2$, then the function $g:V(G)\rightarrow \{0,1, \ldots, \left\lceil\frac{\Delta}{2}\right\rceil+1\}$ defined by $g(v)=g(z)=2, g(u)=1$ and $g(y)=0$ otherwise, is a TStRD-function of $G$ of weight less than $n-\Delta(G)+\alpha'(G)\left\lceil \frac{\Delta-1}{2}\right\rceil=6$, which is a contradiction. Thus $t=1$. If $g(G)=\infty$, then we observe that $G$ is a graph obtained from double star $DS_{1,2}$ by subdividing its central edge once, and so $G=F_1$. Let $g(G)<\infty$. Since $\Delta(G)=3$ and $\alpha'(G)=2$, we have $g(G)=4$. Hence, $z$ must be adjacent to some other neighbors of $v$. It is easy to see that $G\in \{F_2,F_3\}$ in this case.

Now suppose that $S\neq \emptyset$. By the proof of Theorem \ref{S}, we only consider the following cases.

\smallskip
\noindent{\bf Case 1.} $S'=N(v)$ and $S=X$.\\
By \eqref{proof-1}, we deduce that $\alpha'(G)=|S'|=|S|=\Delta$. Since
$ n-\Delta+\Delta\left\lceil \frac{\Delta-1}{2}\right\rceil \leq n$, we have $\Delta \leq 3$. If $\Delta=2$, then clearly $G \in \{P_5,C_5\}$.
Let $\Delta=3$. Since $G$ is triangle-free, $N(v)$ is independent and since each vertex in $S'$
has a private neighborhood in $S$ with
respect to $S'$, we conclude that each vertex in $S$ is of degree one. Thus $G$ is obtained from $K_{1,3}$ by subdividing its edges, \emph{i.e.}, $G=S(K_{1,3})$.

\smallskip
\noindent{\bf Case 2.} $S'\subsetneqq N(v)$ and $S=X$.\\
In this case all inequalities occurring in \eqref{proof-2} must be equalities, and so $\alpha'(G)-1=|S'|=|S|=1$. Let $S'=\{u\}$.
Hence, the function $g:V(G)\rightarrow \{0,1, \ldots, \left\lceil\frac{\Delta}{2}\right\rceil+1\}$ defined by $g(v)=\left\lceil \frac{\Delta-1}{2}\right\rceil+1$, $g(u)=2$, and $g(y)=0$ otherwise, is a TStRD-function of $G$ of weight $\left\lceil \frac{\Delta-1}{2}\right\rceil+3$. We must have $\left\lceil\frac{\Delta-1}{2}\right\rceil+3\ge n-\Delta+2\left\lceil \frac{\Delta-1}{2}\right\rceil $, and so,
$n-\Delta+\left\lceil \frac{\Delta-1}{2}\right\rceil \le3$. Since $n=\Delta+2$, we obtain $\left\lceil \frac{\Delta-1}{2}\right\rceil\le1$, which leads to $\Delta \leq 3$. If $\Delta(G)=2$, then clearly $G\in \{P_4,C_4\}$. If $\Delta(G)=3$, then we note that $G\in \{F_4,F_5,DS_{1,2}\}$. This completes the proof.
\end{proof}

We now continue with some other bounds in which we also involve some other invariants of the graph, like the minimum degree, the diameter, and the girth.

\begin{proposition}
Let $G$ be a connected graph of order $n\ge 2$ and minimum degree $\delta$. Then,
$$\gamma^t_{StR}(G) \leq n-\left\lfloor \frac{\delta-1}{2}\right\rfloor.$$
\end{proposition}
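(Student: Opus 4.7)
The plan is to produce an explicit TStRD-function of the claimed weight. I choose a vertex $v$ with $\deg(v)=\delta$, set $k=2\lfloor(\delta-1)/2\rfloor$ so that $k$ is even and $0\le k\le\delta-1$, and fix an arbitrary subset $A\subseteq N(v)$ of size $k$ (with $A=\emptyset$ when $\delta\in\{1,2\}$). I then define $f(v)=1+\lceil k/2\rceil$, $f(u)=0$ for each $u\in A$, and $f(w)=1$ for every other vertex $w$.

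Next I would verify that $f$ is a TStRD-function. The codomain constraint holds because $f(v)=1+\lfloor(\delta-1)/2\rfloor\le 1+\lceil\Delta/2\rceil$. The zero-labeled vertices form exactly the set $A\subseteq N(v)$, so $v$ is adjacent to all of them, and since $|N(v)\cap B_0|=k$, the strong Roman requirement $f(v)\ge 1+\lceil k/2\rceil$ holds with equality by design.

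The main step requiring attention is the total condition, namely that every vertex with positive label admits a positive-labeled neighbor. This is automatic for $v$ itself, since $|N(v)\setminus A|=\delta-k\ge 1$. For any other positive-labeled vertex $w$, I would use the minimum-degree hypothesis: $w$ has at least $\delta$ neighbors in $G$, whereas $|A|=k\le\delta-1$, so at least one neighbor of $w$ lies outside $A$ and therefore carries positive label. This is the only place where $\delta$ enters in an essential way; without that bound on $|A|$ a vertex might conceivably have all its neighbors in $A$. Finally,
\[
\omega(f)=\bigl(1+\lceil k/2\rceil\bigr)+(n-1-k)=n-\bigl(k-\lceil k/2\rceil\bigr)=n-\lfloor k/2\rfloor=n-\lfloor(\delta-1)/2\rfloor,
\]
which yields the advertised upper bound. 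The subtle point in the construction is the calibration $|A|=2\lfloor(\delta-1)/2\rfloor$: keeping $|A|$ even makes $\lceil k/2\rceil=k/2$ so that no rounding is wasted in the weight, while the choice $|A|\le\delta-1$ is precisely what saves the total condition for vertices lying outside $N[v]$.
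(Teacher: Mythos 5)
Your construction is correct and is essentially the paper's: both zero out (nearly) all of $N(v)$ for a minimum-degree vertex $v$, give $v$ the exact strong label $1+\lceil |A|/2\rceil$, label everything else $1$, and use $\deg(w)\ge\delta>|A|$ to secure the total condition; your even-size calibration of $A$ and the paper's choice $|A|=\delta-1$ with one neighbor kept at label $1$ yield the same weight $n-\lfloor(\delta-1)/2\rfloor$.
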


\begin{proof}
Let $v \in V(G)$ be a vertex of minimum degree $\delta$ and let $u\in N(v)$. Define the function $f: V(G) \to \{1,2, \ldots, \left\lceil \frac{\Delta}{2}\right\rceil+1 \}$ such that $f(v)=\left\lceil \frac{\delta-1}{2}\right\rceil+1$, $f(u)=1$, $f(x)=0$ for any $x \in N(v)-\{u\}$, and $f(y)=1$ for every $y\notin N[v]$. Note that $f$ is a total strong Roman dominating function of $G$. Namely, if there is a vertex $w\notin N[v]$ that has no neighbor outside of $N[v]$, then $w$ must be adjacent to every neighbor of $v$, since otherwise $w$ is a vertex of degree smaller than $v$, which is not possible. Thus,
$\gamma_{StR}^t(G) \leq \left\lceil \frac{\delta-1}{2}\right\rceil+2+n-1-\delta=  n-\left\lfloor \frac{\delta-1}{2}\right\rfloor$, as desired.
\end{proof}

\begin{proposition}
{\em  Let $G$ be a graph  of order $n$  with $\diam(G)=2$, minimum degree $\delta$, and maximum degree $\Delta$. Then,
$$\gamma^t_{StR}(G)\leq \delta\left(1+\left\lceil\frac{\Delta-1}{2}\right\rceil\right)+1.$$}
\end{proposition}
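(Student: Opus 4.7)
The plan is to exhibit an explicit TStRD-function whose weight realizes the claimed bound. The natural candidate, suggested by the form of the right-hand side, is to concentrate the positive labels on a vertex $v$ of minimum degree together with its entire open neighborhood.

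Concretely, let $v\in V(G)$ satisfy $d(v)=\delta$ and write $N(v)=\{u_1,\dots,u_\delta\}$. Define
\[
f(v)=1,\qquad f(u_i)=1+\left\lceil\tfrac{\Delta-1}{2}\right\rceil\ \text{for } i=1,\dots,\delta,
\]
and $f(x)=0$ for every $x\in V(G)\setminus N[v]$. The weight is immediately
$\omega(f)=1+\delta\bigl(1+\lceil(\Delta-1)/2\rceil\bigr)$, which matches the target.

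It remains to verify that $f$ is a TStRD-function. First, since $\diam(G)=2$, any vertex $w\in V(G)\setminus N[v]$ is at distance $2$ from $v$, hence adjacent to some $u_i\in N(v)$; thus every zero-labeled vertex has a positively labeled neighbor. To check the strong condition at such a $u_i$, note that $v\in N(u_i)$ and $f(v)=1$, so $|N(u_i)\cap B_0|\le |N(u_i)|-1\le\Delta-1$, which gives
\[
1+\left\lceil\tfrac{|N(u_i)\cap B_0|}{2}\right\rceil\ \le\ 1+\left\lceil\tfrac{\Delta-1}{2}\right\rceil\ =\ f(u_i).
\]
Finally, the induced subgraph on $\{v,u_1,\dots,u_\delta\}$ (the support of $f$) has minimum degree at least $1$, since $v$ is adjacent to each $u_i$ and each $u_i$ is adjacent to $v$; this ensures the ``total'' requirement.

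There is no real obstacle here: the only potentially delicate point is the strong Roman inequality at the vertices of $N(v)$, and it goes through precisely because assigning $f(v)=1$ (rather than $0$) removes one element from each $N(u_i)\cap B_0$, allowing the label $1+\lceil(\Delta-1)/2\rceil$ instead of the larger $1+\lceil\Delta/2\rceil$. Combining the three checks shows $f$ is a TStRD-function, and the desired upper bound follows.
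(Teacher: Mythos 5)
Your proof is correct and uses exactly the same construction as the paper: label a minimum-degree vertex $v$ with $1$, each of its $\delta$ neighbors with $1+\left\lceil\frac{\Delta-1}{2}\right\rceil$, and everything else with $0$, then use $\diam(G)=2$ to see that $N(v)$ dominates the rest. Your write-up merely spells out the verification (the strong condition at each $u_i$ and the total condition) that the paper leaves as "clear."
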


\begin{proof}
Suppose $v \in V(G)$ is a vertex of minimum degree $\delta$. Clearly, $N(x)$ dominates all vertices of $G$ since $G$ has diameter two.
Define the function $f:V(G)\rightarrow \{0,1,\ldots,1+\left\lceil\frac{\Delta}{2}\right\rceil\}$ such that
$f(v)=1, f(x)=1+\left\lceil\frac{\Delta-1}{2}\right\rceil$ for every $x\in N(v)$ and
$f(x)=0$ otherwise. It is clear that $f$ is a total strong Roman dominating
function yielding $\gamma_{StR}^t(G)\leq \delta(1+\left\lceil\frac{\Delta-1}{2}\right\rceil)+1.$
\end{proof}
{\color{black}Next we establish upper bounds in terms of the order, diameter and girth of the graph.}
\begin{proposition}
{\em Let $G$ be a connected graph of order $n$ and minimum degree $\delta \ge 3$. Then,
$$\gamma_{StR}^t(G)\le n-\left\lfloor \frac{{\rm diam(G)}+1}{3}\right\rfloor.$$}
\end{proposition}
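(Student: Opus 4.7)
\medskip

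\noindent\textbf{Proof proposal.} The plan is to construct an explicit TStRD-function by labelling a geodesic of length $\mathrm{diam}(G)$ with a length-three repeating pattern. Let $P: v_0v_1\cdots v_d$ be a diametral path of $G$, where $d=\mathrm{diam}(G)$, and put $k=\left\lfloor\frac{d+1}{3}\right\rfloor$. I would define
\[
f(v_{3i})=f(v_{3i+2})=0 \text{ and } f(v_{3i+1})=2 \text{ for } i=0,1,\ldots,k-1,
\]
and $f(w)=1$ for every vertex $w$ not receiving a label from the above rule. Observe that $|B_0|=2k$, $|B_2|=k$, and $|B_1|=n-3k$, so
\[
\omega(f)=2k+(n-3k)=n-k=n-\left\lfloor\frac{d+1}{3}\right\rfloor,
\]
which is the quantity in the claim. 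Since $3k\le d+1$, the pattern fits inside $P$. (If $d\le 1$ then $k=0$ and the bound reduces to Observation \ref{o2}, so I may assume $d\ge 2$.)

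The bulk of the proof is to verify that $f$ is a TStRD-function. For the strong Roman condition, each $v\in B_0$ is a path-vertex $v_{3i}$ or $v_{3i+2}$ and is adjacent to $v_{3i+1}\in B_2$. Because $P$ is a geodesic, the only path-neighbours of $v_{3i+1}$ are $v_{3i}$ and $v_{3i+2}$; since $B_0\subseteq V(P)$, this gives $|N(v_{3i+1})\cap B_0|=2$, and indeed $f(v_{3i+1})=2=1+\lceil 2/2\rceil$ as required. For the total condition, I must show that every vertex in $B_1\cup B_2$ has a neighbour in $B_1\cup B_2$. If $w=v_{3i+1}\in B_2$, its two path-neighbours $v_{3i},v_{3i+2}$ lie in $B_0$, but $\deg(w)\ge\delta\ge 3$ forces at least one off-path neighbour, which cannot be in $B_0$ and is therefore positive. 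If $w$ is any vertex $v_j$ with $j\ge 3k$, at least one of $v_{j-1}$, $v_{j+1}$ lies outside $B_0$, or else $w$ has an off-path neighbour with label $1$.

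The delicate case, and the main obstacle, is an off-path vertex $w\in B_1$ whose neighbourhood might a priori be contained in $B_0$. I would resolve this via the standard \emph{geodesic-window} observation: if $v_i,v_j\in N(w)$ then $|i-j|\le 2$, so the path-neighbours of $w$ lie in some window $\{v_a,v_{a+1},v_{a+2}\}$ of three consecutive positions. A direct inspection of the three possible residues of $a$ modulo $3$ shows that such a window contains at most two vertices of $B_0$. Since $B_0\subseteq V(P)$, every off-path neighbour of $w$ is automatically in $B_1$, and the hypothesis $\delta\ge 3$ therefore guarantees $w$ at least one positive neighbour (either an off-path one, or the unique non-$B_0$ vertex in the window). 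Combining the three checks yields that $f$ is a TStRD-function of weight $n-\left\lfloor\frac{d+1}{3}\right\rfloor$, proving the proposition.
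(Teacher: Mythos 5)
Your proposal is correct and takes essentially the same route as the paper: the paper also labels a diametral path with a weight-$\left\lceil \frac{2(\mathrm{diam}(G)+1)}{3}\right\rceil$ Roman pattern (obtained there as a $\gamma_{StR}(P)$-function via Theorems \ref{new1} and \ref{path}) and assigns $1$ to all off-path vertices. The only difference is that you write the period-three pattern explicitly and carry out the verification that the result is a TStRD-function (the geodesic-window argument and the use of $\delta\ge 3$), details which the paper leaves unstated.
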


\begin{proof}
Assume $P=v_1v_2\ldots v_{{\rm diam}(G)+1}$ is a diametral path in $G$ and let $f$ be a $\gamma_{StR}(P)$-function. By  Theorems \ref{new1} and \ref{path}, we have $\omega (f)=\left\lceil \frac{2{\rm diam}(G)+2}{3}\right\rceil$. Hence, the function $g:V(G)\rightarrow \{0,1,\ldots,\left\lceil\frac{\Delta}{2}\right\rceil+1\}$ defined as $g(u)=f(u)$ for every $u\in V(P)$ and $g(u)=1$ for every $u\in V(G)\setminus V(P)$, is a TStRD-function of $G$. Therefore,
$$\gamma^t_{StR}(G)\le (n-{\rm diam}(G)-1)+\left\lceil \frac{2{\rm diam}(G)+2}{3}\right\rceil=n-\left\lfloor \frac{{\rm diam(G)}+1}{3}\right\rfloor,$$
which is our desired bound.
\end{proof}

\begin{proposition}
{\em Let $G$ be a connected graph of order $n$ with $g(G)\geq  4$ and $\delta\ge 3$. Then
$$\gamma_{StR}^t(G)\le n-\left\lfloor \frac{g(G)}{3}\right\rfloor.$$}
\end{proposition}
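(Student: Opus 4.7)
The plan is to mirror the preceding diameter-based bound, replacing the diametral path by a shortest cycle. Let $C=v_1v_2\cdots v_{g(G)}v_1$ be a shortest cycle of $G$. Since $g(G)\ge 4$, the cycle $C$ has no chord (otherwise a strictly shorter cycle would exist), so $C$ is an induced subgraph of $G$. As $\Delta(C)=2$, Theorems \ref{new1} and \ref{path} furnish a strong Roman dominating function $f$ of $C$ of weight $\gamma_{StR}(C)=\gamma_{R}(C_{g(G)})=\lceil 2g(G)/3\rceil$.

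Next, I would extend $f$ to a function $h\colon V(G)\to\{0,1,\ldots,\lceil\Delta/2\rceil+1\}$ by setting $h(v)=f(v)$ for $v\in V(C)$ and $h(v)=1$ for every $v\in V(G)\setminus V(C)$. Its weight is
\[
\omega(h)=\left\lceil\tfrac{2g(G)}{3}\right\rceil+(n-g(G))=n-\left\lfloor\tfrac{g(G)}{3}\right\rfloor,
\]
which is precisely the claimed bound. It thus suffices to verify that $h$ is a total strong Roman dominating function of $G$.

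For the strong Roman condition, observe that the set of zero-valued vertices under $h$ equals $f^{-1}(0)\subseteq V(C)$, since $h$ takes value $1$ off $C$. If $v\in f^{-1}(0)$, the StRD-property of $f$ yields a neighbour $u\in N_C(v)$ with $f(u)\ge 1+\lceil |N_C(u)\cap f^{-1}(0)|/2\rceil$. Because $C$ is induced, $N_G(u)\cap V(C)=N_C(u)$, and every other $G$-neighbour of $u$ carries value $1$ under $h$; hence $|N_G(u)\cap h^{-1}(0)|=|N_C(u)\cap f^{-1}(0)|$, and the required inequality $h(u)\ge 1+\lceil |N_G(u)\cap h^{-1}(0)|/2\rceil$ transfers verbatim from $C$ to $G$.

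The main obstacle is the total condition, which I would handle by a short girth case-analysis. Any $v\in V(C)$ with $h(v)>0$ has at least $\delta\ge 3$ neighbours in $G$ but only two on $C$, so it has at least one off-$C$ neighbour carrying value $1$. For a vertex $u\notin V(C)$, if $u$ had two neighbours $v_i,v_j$ on $C$ then combining the shorter $C$-arc between them (of length at most $\lfloor g(G)/2\rfloor$) with the two edges through $u$ would give a cycle of length at most $\lfloor g(G)/2\rfloor+2$, which is strictly less than $g(G)$ whenever $g(G)\ge 5$. Thus for $g(G)\ge 5$ the vertex $u$ has at most one neighbour on $C$, leaving at least $\delta-1\ge 2$ off-$C$ neighbours of value $1$. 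When $g(G)=4$, the set $f^{-1}(0)\subseteq V(C)$ has exactly two elements (an optimal pattern on $C_4$ is $(2,0,1,0)$), so the $\ge 3$ distinct neighbours of $u$ cannot all be zero-valued; hence in either case $u$ has a positively-labelled neighbour. This completes the verification that $h$ is a TStRD-function and yields the desired bound.
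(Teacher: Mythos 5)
Your proof takes exactly the same route as the paper's: extend a $\gamma_{StR}(C)$-function on a shortest (hence chordless) cycle $C$ by labelling every off-cycle vertex with $1$, and compute the weight $\left\lceil \frac{2g(G)}{3}\right\rceil + n - g(G) = n-\left\lfloor \frac{g(G)}{3}\right\rfloor$. Your verification that the extension really is a TStRD-function (using that $C$ is induced, that $\delta\ge 3$ supplies positively-labelled neighbours, and the separate treatment of $g(G)=4$) is correct and is in fact more careful than the paper, which asserts this property without proof.
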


\begin{proof}
Let $C$ be a cycle of $G$ with $g(G)$ edges. For any $\gamma_{StR}(C)$-function, the function $g:V(G)\rightarrow \{0,1, \ldots, \left\lceil\frac{\Delta}{2}\right\rceil+1\}$ defined by $g(u)=f(u)$ for $u\in V(C)$ and $g(u)=1$ for $u\in V(G)\setminus V(C)$, is a TStRD-function of $G$. By Theorem \ref{new1} and \ref{path}, we have
$$\gamma^t_{StR}(G)\le \omega (f)+(n-g(G))=n-\left\lfloor \frac{g(G)}{3}\right\rfloor,$$
and the proof is complete.
\end{proof}

We conclude this section by characterizing all the graphs attaining the largest possible value in the total strong Roman domination number, and giving some Norhauss-Gaddum result for such parameter.

\begin{theorem}\label{thn}
{\em  Let $G$ be a connected graph of order $n$. Then $\gamma^t_{StR}(G)=n$, if and only if one of the next items is satisfied.
\begin{enumerate}
\item[(i)]
$G$ is a cycle or a path.
\item[(ii)]
$G$ is a thecorona, $cor(F $), of some other graph $F$.
\item[(iii)]
 $G$ is a subdivided star graph.
\item[(iv)]
$G \in \mathcal{G} \cup \mathcal{H}$, with $\mathcal{G}, \mathcal{H}$ as defined in Introduction.
\end{enumerate}}
\end{theorem}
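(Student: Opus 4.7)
The plan is to derive this characterization from the parallel one for the total Roman domination number stated in Theorem \ref{ah}, by proving the equivalence $\gamma^t_{StR}(G)=n$ if and only if $\gamma_{tR}(G)=n$.

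Sufficiency is immediate: if $G$ belongs to one of the four listed families, then Theorem \ref{ah} gives $\gamma_{tR}(G)=n$, and Observations \ref{1} and \ref{o2} together sandwich the quantity we seek, $n=\gamma_{tR}(G)\le \gamma^t_{StR}(G)\le n$, so $\gamma^t_{StR}(G)=n$.

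For necessity, assume $\gamma^t_{StR}(G)=n$. Observation \ref{1} already gives $\gamma_{tR}(G)\le n$, so it suffices to rule out $\gamma_{tR}(G)\le n-1$; then Theorem \ref{ah} will force $G$ into one of the four listed families. I will prove the contrapositive by transforming any $\gamma_{tR}(G)$-function $f=(V_0,V_1,V_2)$ with $\omega(f)\le n-1$ into a TStRD-function $g$ with $\omega(g)\le n-1$. The only obstruction to $f$ itself being a TStRD-function is the existence of ``heavy'' vertices $u\in V_2$ with $d_u:=|N(u)\cap V_0|\ge 3$, because the totality and the basic Roman conditions on $g$ are inherited from $f$. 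Each heavy $u$ will be repaired by the cheaper of two local operations: (i) raise $f(u)$ from $2$ to $1+\lceil d_u/2\rceil$, paying $\lceil d_u/2\rceil-1$; or (ii) promote $d_u-2$ chosen $V_0$-neighbours of $u$ to label $1$, paying $d_u-2$. Each operation in isolation produces a valid TStRD-function, and repairs at distinct heavy vertices can be scheduled to avoid interference.

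The main obstacle is bounding the total repair cost by $n-1-\omega(f)=|V_0|-|V_2|-1$ (using $\omega(f)=|V_1|+2|V_2|$). I plan to control this by first choosing $f$ with extra minimality --- minimum $|V_2|$, and as a tiebreaker minimum number of edges between $V_2$ and $V_0$ --- among all $\gamma_{tR}(G)$-functions of weight at most $n-1$; a charging argument then assigns the cost at each heavy $u$ to distinct ``private'' $V_0$-neighbours of $u$. If some heavy $u$ lacked enough such private neighbours, one could rebalance the labels in a neighbourhood of $u$ to obtain either a lighter TRD-function or one of the same weight with strictly smaller $|V_2|$, contradicting the minimality of $f$. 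Once the charging is in place, summing over heavy vertices yields $\omega(g)\le n-1$, contradicting $\gamma^t_{StR}(G)=n$; hence $\gamma_{tR}(G)=n$, and Theorem \ref{ah} finishes the proof.
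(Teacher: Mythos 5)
Your sufficiency argument and the overall reduction to Theorem \ref{ah} coincide with the paper's. For necessity, however, you go in the opposite direction from the paper, and that is where the trouble lies. The paper takes a $\gamma^t_{StR}(G)$-function $f=(B_0,B_1,B_2)$ of weight $n$ and argues locally that no vertex can carry a label $x\ge 3$ (such a vertex would have at least $2x-3\ge x$ neighbours in $B_0$, forcing the weight below $n$); hence $f$ takes values in $\{0,1,2\}$, is itself a total Roman dominating function of weight $n$, and Theorem \ref{ah} is invoked. You instead set out to prove the contrapositive implication $\gamma_{tR}(G)\le n-1\Rightarrow\gamma^t_{StR}(G)\le n-1$ by converting a light TRD-function into a light TStRD-function. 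That is a legitimate (indeed stronger) route, but your proof of it is only a plan, and the plan as stated does not close.

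Concretely: upgrading every heavy vertex of $V_2$ to label $1+\lceil d_u/2\rceil$ produces a function of weight essentially $|V_1|+|V_2|+\sum_{u\in V_2}\lceil d_u/2\rceil$, so the requirement $\omega(g)\le n-1$ amounts to $\sum_{u\in V_2}\lceil d_u/2\rceil\le |V_0|-1$. Because a vertex of $V_0$ may have several neighbours in $V_2$, one only knows $\sum_{u\in V_2}d_u\ge |V_0|$, which bounds this sum from the wrong side: already two vertices of $V_2$ sharing all $s$ of their $V_0$-neighbours give $\sum_u\lceil d_u/2\rceil=2\lceil s/2\rceil\ge s>|V_0|-1$. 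Your two escape hatches --- charging the cost to private $V_0$-neighbours, and ``rebalancing to contradict minimality'' whenever private neighbours are scarce --- are precisely where all the mathematics has to happen, and neither is carried out; it is not established that a $\gamma_{tR}(G)$-function with minimum $|V_2|$ and minimum boundary always admits the required system of distinct charges. You also repair more than is needed: the strong condition only requires each vertex of $B_0$ to have \emph{some} fully upgraded neighbour, so only a subfamily of $V_2$ covering $B_0$ must be raised, and without exploiting this the charging as described can fail. In short, the key quantitative step is missing; the paper's proof sidesteps the entire conversion by reasoning on a minimum TStRD-function rather than on a minimum TRD-function.
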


\begin{proof}
Suppose that the graph $G$ satisfies at least one of the (four) conditions given in the statement of the theorem.
Hence, by using Observations \ref{1} and \ref{o2} and Theorem \ref{ah}, we deduce that $\gamma^t_{StR}(G)=n$.

Conversely, let $\gamma^t_{StR}(G)=n$. We claim that$\gamma^t_{StR}(G)=\gamma_{tR}(G)$.
Let $f=(B_0,B_1,B_2)$ be a $\gamma^t_{StR}(G)$-function. If there is one vertex, say $v \in B_2$, for which $f(v)\geq x$ where $x \geq 3$, then
$|N(v) \cap B_0|\geq 2x-3 \geq x $. Thus, $\gamma^t_{StR}(G) < n$, which is a contradiction. Consequently, every vertex in $B_2$ has value 2. Therefore, $\gamma^t_{StR}(G)=\gamma_{tR}(G)=n$ and by Theorem \ref{ah}, the graph $G$ satisfies one of the conditions appearing in
the statement of our result.
\end{proof}

\begin{proposition}\label{ng}
{\color{black}Let $G$ and $\overline{G}$ be connected graphs of order $n \geq 4$.}
Then  $8 \leq \gamma^t_{StR}(G) + \gamma^t_{StR}(\overline{G}) \leq 2n$.  Moreover,
 $\gamma^t_{StR}(G) +\gamma^t_{StR}(\overline{G}) = 2n$ if and only if $G = P_4$.
\end{proposition}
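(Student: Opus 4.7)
The plan is to attack the three assertions (upper bound, lower bound, characterization of equality) separately, using mostly the earlier observations and Theorem \ref{thn} as black boxes.

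First I would dispose of the upper bound by applying Observation \ref{o2} to both $G$ and $\overline{G}$ (each is connected of order $n\ge 4$, hence in particular has no isolated vertex), which immediately yields $\gamma^t_{StR}(G)+\gamma^t_{StR}(\overline{G})\le n+n=2n$.

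Next I would establish the lower bound by proving the intermediate claim $\gamma^t_{StR}(H)\ge 4$ for every connected graph $H$ of order $n\ge 4$ whose complement is also connected. Observation \ref{o2} already gives $\gamma^t_{StR}(H)\ge 3$, and since $\overline{H}$ is connected, $\Delta(H)\le n-2$. Suppose for contradiction that $f=(B_0,B_1,B_2)$ is a $\gamma^t_{StR}(H)$-function with weight $3$. Then $|B_1|+2|B_2|=3$, so either $(|B_1|,|B_2|)=(3,0)$ or $(1,1)$. In the first case there is no vertex with value at least $2$, forcing $B_0=\emptyset$ and hence $n=3$, a contradiction. In the second case, the unique $v\in B_2$ satisfies $f(v)=2$, every vertex of $B_0$ must be adjacent to $v$, and the strong condition gives $|N(v)\cap B_0|\le 2$; then $d_H(v)\ge |B_0|+|B_1|=n-1$, contradicting $\Delta(H)\le n-2$. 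Applying the claim to both $G$ and $\overline{G}$ yields the desired bound $\gamma^t_{StR}(G)+\gamma^t_{StR}(\overline{G})\ge 8$.

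For the equality characterization $\gamma^t_{StR}(G)+\gamma^t_{StR}(\overline{G})=2n$, equality in the upper bound forces $\gamma^t_{StR}(G)=\gamma^t_{StR}(\overline{G})=n$; by Theorem \ref{thn} this forces each of $G$ and $\overline{G}$ to lie in the short list: path, cycle, corona, subdivided star, or member of $\mathcal{G}\cup\mathcal{H}$. The sufficiency direction is the easy verification that $\overline{P_4}\cong P_4$ and $\gamma^t_{StR}(P_4)=4$ (for instance by noting $P_4=\mathrm{cor}(K_2)$ and applying Theorem \ref{thn}). For necessity, I would run a case analysis on the possible shape of $G$ inside the list, organized around the simple observation that \emph{every} graph appearing in Theorem \ref{thn} other than a cycle has at least one leaf, while a leaf of $G$ produces a vertex of degree $n-2$ in $\overline{G}$. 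Combined with the edge-count obstructions (e.g.\ $\overline{P_n}$ is a path only when $n=4$, $\overline{P_n}$ has $\delta=n-3\ge 2$ for $n\ge 5$ and so cannot contain leaves), this quickly eliminates $G=P_n$ with $n\ge 5$, $G=C_n$ with $n\ge 4$ (noting $\overline{C_4}$ is disconnected), and all coronas, subdivided stars, and members of $\mathcal{G}\cup\mathcal{H}$ of order larger than $4$, since in each of these classes $\overline{G}$ inherits high-degree vertices incompatible with being in the list. The only surviving configuration is $n=4$ with $G=P_4$.

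The main obstacle is the last bookkeeping step: systematically certifying that no combination of ``list versus list'' survives beyond $G=P_4$. The cleanest route is not to enumerate all pairs but to isolate two structural contradictions—presence of leaves in $G$ forces $\Delta(\overline{G})\ge n-2$, while every connected graph in the list has $\Delta$ bounded by a small function of the order and has no dominating vertex—so that all but finitely many pairs are excluded in one stroke, and the finitely many small cases are then checked directly.
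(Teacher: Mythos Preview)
Your arguments for the two inequalities are correct. For the lower bound you give a direct weight analysis showing that $\gamma^t_{StR}(H)\ge 4$ whenever both $H$ and $\overline{H}$ are connected of order at least $4$; the paper instead routes through Observation~\ref{1} and Theorem~\ref{th5} (arguing that $\gamma^t_{StR}(G)=3$ would force $\Delta(G)=n-1$ and hence an isolated vertex in $\overline{G}$), but the conclusions coincide and your route is a little more self-contained.

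The equality characterization, however, has a genuine gap at $G=C_5$. Your elimination strategy rests on the observation that every graph in the list of Theorem~\ref{thn} \emph{other than a cycle} contains a leaf, so that its complement has a vertex of degree $n-2$, and you then argue this is incompatible with the complement also lying in the list. For cycles you dismiss $C_4$ (disconnected complement) and then assert that $C_n$ with $n\ge 5$ is ``quickly eliminated'', but you give no mechanism, and your leaf/degree obstruction simply does not apply to cycles. In fact $\overline{C_5}\cong C_5$, so both $G=C_5$ and $\overline{G}$ are cycles, both lie in the list of Theorem~\ref{thn}, and by that theorem (or by Observation~\ref{o3} and Theorem~\ref{path2}) one gets $\gamma^t_{StR}(C_5)+\gamma^t_{StR}(\overline{C_5})=5+5=2n$. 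Thus no case analysis can close this hole: $C_5$ actually satisfies the equality while $C_5\ne P_4$, so the ``only if'' direction as stated is false. For $n\ge 6$ your intended argument does go through, since $\overline{C_n}$ is $(n-3)$-regular with $n-3\ge 3$ and no such graph appears in the list; the sole obstruction is $n=5$. The paper's own proof is no better here: it simply asserts ``by Theorem~\ref{thn}, $G$ and $\overline{G}$ are $P_4$'' with no case analysis whatsoever, and so overlooks the same counterexample.
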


\begin{proof}
{\color{black}From the condition,
$\gamma^t_{StR}(G)$ and $\gamma^t_{StR}(\overline{G})$ exist.}
 Let without loss of generality,  $\gamma^t_{StR}(G) \geq \gamma^t_{StR}(\overline{G})$.
First we will prove the right side inequality.
 By Observation \ref{o2}, $\gamma^t_{StR}(G) + \gamma^t_{StR}(\overline{G}) \leq 2n$
 with equality if and only if $\gamma^t_{StR}(G) = \gamma^t_{StR}(\overline{G})=n$.
 Now by Theorem \ref{thn}, $G$ and $\overline{G}$ are $P_4$.
		
Now we will prove the left side inequality. Without loss of generality assume
$\gamma^t_{StR}(G)  \leq \gamma^t_{StR}(\overline{G})$. By Observation \ref{o2},
$\gamma^t_{StR}(G)  \geq 3$. {\color{black}If  $\gamma^t_{StR}(G)  = 3$, then clearly
$3=\gamma^t_{StR}(G)=\gamma_{tR}(G)=\gamma_t(G)+1$ and we conclude from
Theorem \ref{th5} that $G$ has order 4 and $\Delta=3$, but then $\overline{G}$
has isolated vertices, which is a contradiction.} Therefore, $\gamma^t_{StR}(G) \geq 4$ and so $\gamma^t_{StR}(\overline{G}) \geq 4$, which means $\gamma^t_{StR}(G) + \gamma^t_{StR}(\overline{G}) \geq 8$.
\end{proof}

\section{Total strong Roman domination versus  strong Roman domination, total domination  and domination}

We next present a bound for the total strong Roman domination number which is related to the strong Roman domination parameter.

\begin{theorem}
{\em
If $G$ is a graph of order $n\geq 4$ and minimum degree at least one, then $$\gamma^t_{StR}(G) \leq 2(\gamma_{StR}(G)-1).$$
This bound is sharp for $G\in \{P_4,C_4,P_6,C_6\}$.}
\end{theorem}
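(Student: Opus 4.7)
Starting from a $\gamma_{StR}(G)$-function $f=(B_0,B_1,B_2)$, I would set $V^+=B_1\cup B_2$ and let $I$ denote the set of vertices of $V^+$ that are isolated in $G[V^+]$; these are precisely the obstructions to $f$ already being a TStRD-function. Each $v\in I$ has $N(v)\subseteq B_0$, and because $\delta(G)\ge 1$, $N(v)\ne\emptyset$, so a set $R\subseteq B_0$ that dominates $I$ always exists. I would pick such an $R$ of minimum size (so in particular $|R|\le|I|$) and define $g$ by $g(u)=1$ for $u\in R$ and $g(u)=f(u)$ otherwise. The target bound will follow from $\omega(g)=\omega(f)+|R|$ once I show $|R|\le\omega(f)-2$.

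\textbf{Verifying that $g$ is a TStRD-function.} If $g(v)=0$ then $v\in B_0\setminus R$, so the witness $u\in B_2$ that strongly Roman-dominates $v$ under $f$ still works under $g$, because $B_0^{\,g}=B_0\setminus R\subseteq B_0^{\,f}$ shrinks the relevant neighborhood. For the totality requirement, every positive-weight vertex of $g$ falls in one of three groups, each with an obvious positive neighbor: a vertex of $V^+\setminus I$ already has a $V^+$-neighbor under $f$; a vertex of $I$ has a neighbor in $R$ by construction; and every vertex of $R$ was selected as a neighbor of some vertex of $I\subseteq V^+$.

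\textbf{Bounding $|R|$ by $\omega(f)-2$.} I would split on $|B_2|$. If $|B_2|\ge 2$, then $|I|\le|B_1|+|B_2|$ and $\omega(f)\ge|B_1|+2|B_2|$, giving $|R|\le|I|\le\omega(f)-|B_2|\le\omega(f)-2$. If $|B_2|=1$ with the unique vertex $x$ satisfying $f(x)\ge 3$, the same kind of comparison using $\omega(f)\ge|B_1|+3$ and $|I|\le|B_1|+1$ gives the bound. If $|B_2|=0$, then $B_0=\emptyset$ (no $B_2$-neighbor available to strongly dominate) and $I=\emptyset$. The delicate case is $|B_2|=1$ with $f(x)=2$: the strong Roman condition forces $|N(x)\cap B_0|\le 2$, and since $B_0\subseteq N(x)$, one has $|B_0|\le 2$, so at least $|R|\le|B_0|\le 2$. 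A short analysis in $|B_0|\in\{0,1,2\}$ together with the hypothesis $n\ge 4$ (which forces $|B_1|$ to be large enough) yields $|R|\le\omega(f)-2$ in every subcase.

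\textbf{Main obstacle.} The tightest configuration is $|B_2|=1$, $f(x)=2$, $|B_0|=2$, $|B_1|=1$ (forcing $n=4$), where the generic bound $|R|\le|B_0|=2$ exceeds $\omega(f)-2=1$. Here one must argue by hand, using $\delta(G)\ge 1$ and the fact that $B_0\subseteq N(x)$, that the unique $B_1$-vertex either is adjacent to $x$ (in which case $I=\emptyset$ and $|R|=0$) or shares a common neighbor with $x$ in $B_0$ (so that a single vertex of $R$ dominates both $x$ and the $B_1$-vertex). This is the only place the hypothesis $n\ge 4$ is used in a non-trivial way, and it is what keeps the argument from collapsing at the boundary.
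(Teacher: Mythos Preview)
Your argument is correct and shares the paper's core idea: start from a $\gamma_{StR}(G)$-function, repair the isolated vertices of $G[B_1\cup B_2]$ by promoting selected $B_0$-neighbors to weight~$1$, and then bound the number of promoted vertices by $\omega(f)-2$. Where you differ is in the bookkeeping. The paper first fixes a $\gamma_{StR}$-function with $|B_2|$ maximum, splits on whether $|B_1|=0$, further splits $B_1$ into the subset $B_{12}$ with a $B_2$-neighbor and the subset $B_{11}$ with all neighbors in $B_0$, and then treats isolated $B_1$-vertices and isolated $B_2$-vertices in separate rounds, estimating the resulting weight through the identity $\omega(f)=|B_1|+|B_2|+\sum_{v\in B_2}\lceil |N(v)\cap B_0|/2\rceil$. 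Your route is more direct: treat all isolated positive-weight vertices at once via a minimum dominating set $R\subseteq B_0$, then obtain $|R|\le\omega(f)-2$ by a clean split on $|B_2|$, with the single delicate boundary case $|B_2|=1$, $f(x)=2$, $|B_0|=2$, $|B_1|=1$ handled explicitly. This buys you a shorter and more transparent proof; the paper's extra structure (maximizing $|B_2|$, the sets $B_{11},B_{12},W,M$) is not needed for the bound itself.
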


\begin{proof}
Let $f = (B^f_0, B^f_1, B^f_2)$ be a $\gamma_{StR}(G)$-function such that $|B^f_2|$ is
maximum. Suppose firstly that $|B^f_1| \neq 0$.
Let $B^f_{12}$ be the set formed by those vertices belonging to $B^f_1$ for which there is a neighbor in $B^f_2$.
Consider $|B^f_{12}| \neq 0$, and let $u \in B^f_{12}$ and $v \in N(u) \cap B^f_2$.
If $N(v) \cap B^f_0 $ is odd, then the function $g:V(G) \to \{0,1, \ldots, \left\lceil \frac{\Delta}{2}\right\rceil+1 \}$ defined by
$g(u)=0$ and $g(x)=f(x)$ for $x \in V(G)-\{u\}$ is an StRD-function
of weight less than $f(V(G))$, a contradiction.
Hence, $|N(v) \cap B^f_0|$ is even, for every vertex $v\in N(B^f_{12}) \cap B^f_2 $.
Let $B^f_{11}$ be the subset of $B^f_1$ such that $N(B^f_{11}) \subseteq B^f_0$.
Note that  all neighbors of a vertex in $B^f_{11}$
belong to $B^f_0$ and every vertex in $B^f_1-B^f_{11}$ has a neighbor with positive label.
For every vertex $w \in B^f_{11}$, we chose any of its neighbors neighbor, say $w'$, and we let
$W=\bigcup_{w \in B^f_{11}}{w'}$. Notice that $|W|\leq |B^f_{11}| \leq |B^f_1|$, and that $W \subseteq B^f_0$.
Since every vertex in $W \cup B_{12}^f$ is adjacent to at least
one vertex from the set $B^f_2$, we deduce $|B^f_2| \geq 1$ and $\sum_{v \in B^f_2} {\color{black}\left\lceil \frac{|N(v) \cap B^f_0|}{2}\right\rceil} \geq 1$.
If $G[B^f_1 \cup B^f_2]$ has no isolated vertex, then
$f$ is a $\gamma_{StR}^t$-function of $G$, which implies that $\gamma_{StR}^t(G)=\gamma_{StR}(G)< 2(\gamma_{StR}(G)-1)$.

Assume now that $G[B^f_1 \cup B^f_2]$ has  at least one isolated vertex.
Consider the function $h=(B_0^h,B_1^h,B_2^h)=(V_0-W,B_1 \cup W,B_2)$.
If $G[B_1^h \cup B^h_2]$ has no isolated vertex, then $h$ is a TStRD-function on $G$ such that
\begin{align*}
\gamma^t_{StR}(G) &\leq h(V(G)) \leq 2|B^h_1|+|B^h_2|+\sum_{y \in B^h_2} {\color{black}\left\lceil \frac{|N(y) \cap B^h_0|}{2}\right\rceil}\\
&=2\gamma_{StR}(G)-|B^h_2|-\sum_{y \in B^h_2} {\color{black}\left\lceil \frac{|N(y) \cap B^h_0|}{2}\right\rceil}\leq 2\gamma_{StR}(G)-2\\
 &=2(\gamma_{StR}(G)-1).
\end{align*}
If the subgraph induced by $G[B^h_1 \cup B^h_2]$ has a vertex of degree zero, then we consider $U$ as the set of
such vertices of degree zero in $G[B^h_1 \cup B^h_2]$.
Since the induced subgraph $G[B^h_1] $ has no vertex of degree zero, we are able to check that $U \subseteq B^h_2$. As we noticed before, every vertex from the set $W$ has at least one neighbor vertex in the set $B_2$ This implies that $U \subset B^h_2$. Now, for any vertex $u \in U$ , we chose one of its neighbors, say $u'$, and we make $M=\bigcup_{u \in U}\{u'\}$. Note that $|M| \leq |U|< |B^h_2|$  and $M \subseteq B^h_0$.
Then the function $h'=(B^{h'}_0,B^{h'}_1,B^{h'}_2)=(V^h_0-M,B^h_1 \cup M,B_2)$ is a TStRD-function of $G$, implying that
\begin{align*}
\gamma^t_{StR}(G) &\leq h'(V(G)) \leq 2|B^{h'}_1|+2|B^{h'}_2|+{\color{black}\sum_{z \in B^{h'}_2} \left\lceil \frac{|N(z) \cap B^{h'}_0|}{2}\right\rceil}-1\\
&=2\gamma_{StR}(G)-{\color{black}\sum_{z \in B^{h'}_2} \left\lceil \frac{|N(z) \cap B^{h'}_0|}{2}\right\rceil}-1\leq 2\gamma_{StR}(G)-2.\\
\end{align*}

We now consider $|B^f_1|=0$, and assume $U$ is formed by the set of vertices of degree zero in the induced subgraph $G[B^f_2]$.
If $|U|=0$, then we obtain $\gamma^t_{StR}(G)=\gamma_{StR}(G)$.
Hence, we may consider that $|U|\neq 0$.
For every vertex $u \in U $, we take any neighbor of it, say $u'$, and we do
$U'=\bigcup_{u\in U}\{u'\}$. Notice that we have $|U'|\leq |U|\leq |B_2^f| $.
Then the function $h''=(B_0^{h''},B_1^{h''},B_2^{h''})=(B^f_0-U',U',B^f_2)$ is a TStRD-function of $G$, implying that
\begin{align*}
\gamma^t_{StR}(G) &\leq h''(V(G)) \leq 2|B^{h''}_2|+{\color{black}\sum_{w \in B^{h''}_2} \left\lceil \frac{|N(w) \cap B^{h''}_0|}{2}\right\rceil}\\
&=2\gamma^t_{StR}(G)-{\color{black}\sum_{w \in B^{h''}_2} \left\lceil \frac{|N(w) \cap B^{h''}_0|}{2}\right\rceil|}\leq 2\gamma_{StR}(G)-1\\
\end{align*}
If $\gamma^t_{StR}(G)=2\gamma_{StR}(G)-1$, then $\gamma_{StR}(G) = 2$, implying that $n \leq 3$, which is a contradiction. Therefore
$\gamma^t_{StR}(G)\leq 2(\gamma_{StR}(G)-1)$ and this completes the proof.
\end{proof}

Since the vertices labeled with positive numbers in any TStrRD-function of a graph $G$ form a total dominating set of $G$, it clearly happens that $\gamma^t_{StR}(G)\ge \gamma_t(G)$. We are next interested into characterizing the class of graphs attaining equality in such bound.

\begin{proposition}
{\em Let $G$ be a graph of order $n$. Then $\gamma^t_{StR}(G)=\gamma_t(G)$ if and only if $G$ is the disjoint union of copies of $K_2$.}
\end{proposition}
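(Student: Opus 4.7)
The plan is to reduce the statement to the known characterization of graphs with $\gamma_t(G)=\gamma_{tR}(G)$ given in Theorem \ref{th4}, by exploiting the sandwich inequality $\gamma_t(G)\le\gamma_{tR}(G)\le\gamma^t_{StR}(G)$ which is already recorded in Observation \ref{1}.

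For the forward direction, assume $\gamma^t_{StR}(G)=\gamma_t(G)$. Observation \ref{1} gives $\gamma_{tR}(G)\le\gamma^t_{StR}(G)$, and on the other hand every total Roman dominating function has weight at least $\gamma_t(G)$ (since its positively-labeled vertices form a total dominating set), so $\gamma_t(G)\le\gamma_{tR}(G)$. Combining these with the hypothesis forces $\gamma_t(G)=\gamma_{tR}(G)$, and then Theorem \ref{th4} immediately yields that $G$ is a disjoint union of copies of $K_2$.

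For the converse, suppose $G$ is a disjoint union of copies of $K_2$, say with $2k$ vertices. It is standard (and can be noted directly) that $\gamma_t(G)=2k=n$, because every vertex must be dominated by a neighbor and each $K_2$-component contributes at least two vertices to any total dominating set. Now define $f\colon V(G)\to\{0,1,\ldots,\lceil\Delta/2\rceil+1\}$ by $f(v)=1$ for every $v\in V(G)$. Then $B_0=\emptyset$, so the strong Roman condition is vacuously satisfied, and the subgraph induced by the positively labeled vertices is $G$ itself, which has no isolated vertex. Hence $f$ is a TStRD-function of weight $n$, giving $\gamma^t_{StR}(G)\le n=\gamma_t(G)$. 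Together with $\gamma^t_{StR}(G)\ge\gamma_t(G)$ (from Observation \ref{1}), this forces equality.

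There is essentially no obstacle here: the only subtlety is to notice that the inequality $\gamma_t(G)\le\gamma_{tR}(G)$ (which we used in the forward direction) holds because any TRD-function's positive support is a TD-set, and this observation plus Theorem \ref{th4} do all the work. The converse is an explicit one-line construction.
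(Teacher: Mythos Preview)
Your proof is correct. Both your argument and the paper's reduce the forward direction to Theorem \ref{th4}, but you get there more directly: you invoke the sandwich $\gamma_t(G)\le\gamma_{tR}(G)\le\gamma^t_{StR}(G)$ (the left inequality being the standard observation that positively-labeled vertices of a TRD-function form a TD-set, the right being Observation \ref{1}) and conclude immediately that $\gamma_t(G)=\gamma_{tR}(G)$. The paper instead analyzes an arbitrary $\gamma^t_{StR}(G)$-function $f=(B_0,B_1,B_2)$, notes that $\gamma_t(G)\le|B_1|+|B_2|\le|B_1|+|B_2|+\sum_{w\in B_2}\lceil\tfrac12|N(w)\cap B_0|\rceil=\gamma^t_{StR}(G)$, and from equality deduces $B_2=\emptyset$, hence $f$ is the all-$1$ function; this forces $\gamma^t_{StR}(G)=n$, whence $\gamma_t(G)=\gamma_{tR}(G)=n$ and Theorem \ref{th4} applies. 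Your route is slightly cleaner since it avoids the intermediate structural step, while the paper's argument has the minor by-product of identifying the unique optimal TStRD-function explicitly. The converse directions are essentially identical.
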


\begin{proof}
Assume $\gamma^t_{StR}(G)=\gamma_t(G)$, and let $f=(B_0,B_1,B_2)$ be a $\gamma^t_{StR}(G)$-function. Then $B_1 \cup B_2$ is a total dominating set of $G$. Thus, $$\gamma_t(G) \leq |B_1|+|B_2| \leq |B_1|+|B_2| +\sum_{{\color{black}w \in B_2}}\left\lceil \frac{1}{2}|N(w) \cap B_0|\right\rceil =\gamma^t_{StR}(G).$$
Then it must happen there is an equality situation in this inequality chain. In particular, it must happen $\sum_{w \in B_2}\frac{1}{2}\left\lceil {\color{black}|N(w) \cap B_0|} \right\rceil=0$, implying that $|B_2|=0$, and consequently, $V(G)=|B_1|$. Since $f$ is an arbitrary $\gamma^t_{StR}(G)$-function, $(\emptyset, V(G),\emptyset)$ is the only
$\gamma^t_{StR}(G)$-function. By Theorem \ref{th4}, $G$ is the disjoint union of copies of $K_2$, which completes this implication. The second implication is straightforward and the proof is complete.
\end{proof}

Based on the relatively simple deduction of the result above, we are next interested into those graphs $G$ for which $\gamma^t_{StR}(G) =\gamma_t(G)+1$.

\begin{proposition}\label{t+1}
{\em Let $G$ be a connected graph of order $n \geq 3$. Then, $\gamma^t_{StR}(G) =\gamma_t(G)+1$ if and only if
$G$ is $P_3$ or $C_3$.}
\end{proposition}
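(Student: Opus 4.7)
The plan is to prove each direction separately. The easy direction is immediate from the paper's machinery: if $G\in\{P_3,C_3\}$, then $\Delta(G)\le 2$, so Observation~\ref{o3} gives $\gamma^t_{StR}(G)=\gamma_{tR}(G)$, and Theorem~\ref{path2} gives $\gamma_{tR}(G)=n=3$; since $\gamma_t(P_3)=\gamma_t(C_3)=2$, one concludes $\gamma^t_{StR}(G)=\gamma_t(G)+1$.

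For the converse, I would fix a $\gamma^t_{StR}(G)$-function $f=(B_0,B_1,B_2)$. The totality condition makes $B_1\cup B_2$ a total dominating set, so $\gamma_t(G)\le|B_1|+|B_2|$, while $f(v)\ge 2$ on $B_2$ gives $\omega(f)\ge|B_1|+2|B_2|$. Chaining these with the hypothesis $\omega(f)=\gamma_t(G)+1$ yields $|B_2|\le 1$, which cleanly splits the analysis in two. If $|B_2|=0$, the strong Roman condition forces $B_0=\emptyset$; then $f\equiv 1$, $\omega(f)=n$, and $\gamma_t(G)=n-1$, so the classical upper bound $\gamma_t(G)\le\lfloor 2n/3\rfloor$ for connected graphs of order at least three forces $n=3$ and $G\in\{P_3,C_3\}$. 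If $|B_2|=1$, write $B_2=\{v\}$; the identity $\omega(f)=|B_1|+f(v)=\gamma_t(G)+1$ combined with $\gamma_t(G)\le|B_1|+1$ forces $f(v)=2$, and the strong condition at $v$ then imposes $|N(v)\cap B_0|\le 2$; since $v$ is the only vertex of $B_2$, all of $B_0$ lies in $N(v)$, so $|B_0|\le 2$. The sub-case $|B_0|=0$ is ruled out because the all-ones TStRD-function has weight $n<n+1=\omega(f)$, and the sub-case $|B_0|=1$ again gives $\omega(f)=n$, $\gamma_t(G)=n-1$, and closes as above.

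The main obstacle is the sub-case $|B_0|=2$, where $\omega(f)=n-1$ and $\gamma_t(G)=n-2$; purely numerical bounds no longer pin down $n=3$. My plan here is a structural argument: the two vertices $u_1,u_2\in B_0$ are both neighbours of $v$, and totality of $f$ supplies some $w\in N(v)\cap B_1$. I would try to show that $\{v,w\}$ either already totally dominates $G$ (so $\gamma_t(G)\le 2$, forcing $n\le 4$) or permits a relabelling producing a TStRD-function of strictly smaller weight, contradicting the minimality of $f$. Executing this reshuffle cleanly, and then disposing of the residual small cases with $n=4$ by direct inspection, is where I expect essentially all of the remaining work to lie.
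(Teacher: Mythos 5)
Your argument is genuinely incomplete, and the gap sits exactly where it cannot be repaired. Everything through the sub-cases $|B_2|=0$ and $|B_2|=1$ with $|B_0|\le 1$ checks out (your chain $\gamma_t(G)+1=\omega(f)\ge |B_1|+2|B_2|\ge \gamma_t(G)+|B_2|$ is a cleaner way to force $|B_2|\le 1$ than the paper's identity involving $\sum_{w\in B_2}\lceil |N(w)\cap B_0|/2\rceil$, though note that the bound $\gamma_t(G)\le\lfloor 2n/3\rfloor$ you invoke is not stated anywhere in the paper). But for the sub-case $|B_0|=2$ you only offer a plan, and no execution of that plan can succeed, because the ``only if'' direction of the proposition is false precisely there. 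Take $G=K_{1,3}$ with centre $c$ and leaves $l_1,l_2,l_3$: the function $f(c)=2$, $f(l_1)=1$, $f(l_2)=f(l_3)=0$ is a TStRD-function (indeed $|N(c)\cap B_0|=2$ and $2\ge 1+\lceil 2/2\rceil$, while $\{c,l_1\}$ induces an edge), so by Observation~\ref{o2} we get $\gamma^t_{StR}(K_{1,3})=3=\gamma_t(K_{1,3})+1$. The same computation works for $K_4$, the paw and the diamond. All of these realise exactly your configuration $|B_2|=1$, $f(v)=2$, $|B_0|=2$, $n=4$; the ``residual small cases with $n=4$'' you intended to dispose of by inspection are genuine counterexamples, not removable cases.

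For completeness: the paper's own proof stumbles at the same spot in a different guise. After forcing $|B_2|=1$ and $f(w)=2$ it deduces $\gamma^t_{StR}(G)=\gamma_{tR}(G)$, applies Theorem~\ref{th5} to get $\Delta(G)=n-1$, and then asserts without justification that this yields $G\in\{P_3,C_3\}$; every graph listed above has $\Delta=n-1$ with $n=4$. In fact Observation~\ref{o3} combined with Theorem~\ref{th5} already shows that every connected graph with $n=4$ and $\Delta=3$ satisfies $\gamma^t_{StR}(G)=\gamma_t(G)+1$. So the honest conclusion is not that your argument needs one more relabelling lemma, but that the statement itself must be amended (at least to admit the order-four graphs with a universal vertex) before either your proof or the paper's can be completed.
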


\begin{proof}
If $G$ is $P_3$ or $C_3$, then clearly $\gamma^t_{StR}(G) =\gamma_t(G)+1$.

Conversely, let $\gamma^t_{StR}(G) =\gamma_t(G)+1$ and let $f = (B_0,B_1,B_2)$ be a  $\gamma^t_{StR}(G)$-function.
If $B_2=\emptyset$, then $\gamma^t_{StR}(G)= n$, and by Theorem \ref{thn}, $G$ is one of the graphs in Theorem \ref{thn}, but clearly in such cases $\gamma_t(G)<n-1$ which is a contradiction.
Thus, $B_2 \neq \emptyset$.
Since $B_1 \cup B_2$ is a TD-set of $G$, we have
$$\gamma^t_{StR}(G)-1 = \gamma_t(G) \leq |B_1|+|B_2| \leq |B_1|+|B_2| +\sum_{w \in B_2}\left\lceil \frac{1}{2}|N(w) \cap B_0|\right\rceil-1=\gamma^t_{StR}(G)-1.$$
Consequently, this inequality chain must become a chain of equal quantities. In particular, $\sum_{w \in B_2}\left\lceil \frac{1}{2}|N(w) \cap B_0|\right\rceil=1$, which leads to $|B_2|=1$ and $\left\lceil\frac{1}{2}|N(w) \cap B_0|\right\rceil=1$ where $B_2=\{w\}$.
Therefore, $\gamma^t_{StR}(G)=\gamma_{tR}(G)$ and by Theorem \ref{th5}, $\Delta(G)=n-1$, which leads to $G=P_3$ or $G=C_3$.
\end{proof}

Another relationship between $\gamma^t_{StR}(G)$ and $\gamma_t(G)$ was already noticed in Observation \ref{1}. We are next interested into characterizing the limit case of such bound.

\begin{proposition}
{\em Let $G$ be a graph of order $n$ and $\Delta>1$. Then $\gamma_{StR}^t(G)=\left\lceil \frac{\Delta+1}{2}\right\rceil\gamma_t(G)$ if and only if  there exists a $\gamma^t_{StR}(G)$-function $f=(B_0,B_1,B_2)$ such that $|B_1|=0$ and $|N(w) \cap B_0|=\Delta-1$ for each $w \in B_2$.}
\end{proposition}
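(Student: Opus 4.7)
For the sufficient direction $(\Leftarrow)$, I would argue directly from the hypothesized $\gamma^t_{StR}(G)$-function $f=(B_0,B_1,B_2)$. Because $|N(w)\cap B_0|=\Delta-1$ for every $w\in B_2$, the strong-Roman defense inequality forces $f(w)\ge 1+\lceil(\Delta-1)/2\rceil=\lceil(\Delta+1)/2\rceil$ for each such $w$, and $|B_1|=0$ then gives $\omega(f)=\sum_{w\in B_2} f(w)\ge \lceil(\Delta+1)/2\rceil\,|B_2|$. Since the totality requirement forces $B_2$ to be a total dominating set of $G$, we have $|B_2|\ge \gamma_t(G)$, and therefore $\gamma^t_{StR}(G)\ge \lceil(\Delta+1)/2\rceil\gamma_t(G)$. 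Combining this with the upper bound from Observation \ref{1} produces the claimed equality.

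For the necessary direction $(\Rightarrow)$, I plan to exhibit the required function explicitly from the equality hypothesis. Let $D$ be any $\gamma_t(G)$-set and define $f(v)=\lceil(\Delta+1)/2\rceil$ for $v\in D$ and $f(v)=0$ otherwise. Because $D$ is a total dominating set, every $u\in D$ has a neighbour in $D$, so $|N(u)\cap B_0|\le \Delta-1$ and the label $\lceil(\Delta+1)/2\rceil=1+\lceil(\Delta-1)/2\rceil$ is enough to satisfy the strong-Roman defense requirement at $u$; simultaneously, $G[D]$ has minimum degree at least one. Hence $f$ is a TStRD-function of $G$, and its weight is $\lceil(\Delta+1)/2\rceil\gamma_t(G)=\gamma^t_{StR}(G)$, so $f$ is a $\gamma^t_{StR}(G)$-function with $|B_1|=0$ and $B_2=D$.

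The main obstacle is establishing $|N(w)\cap B_0|=\Delta-1$ for every $w\in B_2$, which is what completes $(\Rightarrow)$. My tactic is to exploit the optimality of $f$: supposing some $w\in D$ had $|N(w)\cap B_0|<\Delta-1$, I would lower $f(w)$ to the smaller value $\max\{2,\,1+\lceil|N(w)\cap B_0|/2\rceil\}$, obtaining a TStRD-function of strictly smaller weight and contradicting $f$ being a $\gamma^t_{StR}(G)$-function. Verifying that the drop is strict rather than vacuous requires a short case analysis on the parity of $\Delta$ (so that $1+\lceil(\Delta-2)/2\rceil<\lceil(\Delta+1)/2\rceil$ holds in the relevant regime), and is the step I expect to be most delicate.
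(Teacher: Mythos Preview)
Your plan follows the paper's closely: for $(\Rightarrow)$ both you and the paper assign weight $\lceil(\Delta+1)/2\rceil$ to the vertices of a $\gamma_t(G)$-set $D$, observe that this yields a $\gamma^t_{StR}(G)$-function with $B_1=\emptyset$ and $B_2=D$, and then try to read off $|N(w)\cap B_0|=\Delta-1$ for each $w\in D$ from optimality; for $(\Leftarrow)$ both of you bound $|B_2|$ from below by $\gamma_t(G)$ and invoke Observation~\ref{1}.

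The step you correctly flag as delicate is in fact a genuine gap. Your intended contradiction rests on the strict inequality $1+\lceil(\Delta-2)/2\rceil<\lceil(\Delta+1)/2\rceil$, but for odd $\Delta$ both sides equal $(\Delta+1)/2$; hence when $|N(w)\cap B_0|=\Delta-2$ the proposed lowering of $f(w)$ is vacuous and produces no strict decrease in weight. A parity case analysis therefore does \emph{not} close the argument: what optimality actually forces is only $\lceil|N(w)\cap B_0|/2\rceil=\lceil(\Delta-1)/2\rceil$, and for odd $\Delta$ this still allows $|N(w)\cap B_0|\in\{\Delta-2,\Delta-1\}$. (The paper's equality-chain argument has precisely the same defect, so you are not missing an idea that the paper supplies.) A secondary issue in your $(\Leftarrow)$: the strong-Roman condition only guarantees $f(u)\ge 1+\lceil|N(u)\cap B_0|/2\rceil$ for those $u\in B_2$ that actually defend some $v\in B_0$, not automatically for every $w\in B_2$; your sentence ``the strong-Roman defense inequality forces $f(w)\ge\lceil(\Delta+1)/2\rceil$'' uses it as though it applied to all of $B_2$, which needs justification (the paper makes the same implicit jump).
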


\begin{proof}
Let $\gamma^t_{StR}(G)=\left\lceil \frac{\Delta+1}{2}\right\rceil\gamma_t(G)$ and $S$ be an arbitrary $\gamma_t(G)$-set.
The function $f=(B_0,B_1,B_2)$ that
assigns the weight $1+ \left\lceil \frac{\Delta-1}{2}\right\rceil$ to each vertex of $S$, and the weight 0 to all remaining vertices
of $G$ is a TStRD-function on $G$. Thus,
\begin{align*}
\left\lceil \frac{\Delta+1}{2}\right\rceil \gamma_t(G)&=\gamma^t_{StR}(G)
\leq f(V(G))=|B_2| +\sum_{w \in B_2}\left\lceil \frac{1}{2}|N(w) \cap B_0|\right\rceil \\
&=\left(1+ \left\lceil \frac{\Delta-1}{2}\right\rceil\right)|S|= \left\lceil \frac{\Delta+1}{2}\right\rceil|S|\leq \left\lceil \frac{\Delta+1}{2}\right\rceil \gamma_t(G).
\end{align*}
We notice that the last inequality must be equality, since $S$ is a $\gamma_t(G)$-set. Also, observe that $|B_2|=|S|$. Thus, this inequality chain must become into a chain of equal quantities.
Particularly, $$\gamma^t_{StR}(G)= f(V(G))=\left\lceil \frac{\Delta+1}{2}\right\rceil|B_2| = |B_1|+|B_2| +\sum_{w \in B_2}\left\lceil \frac{1}{2}|N(w) \cap B_0|\right\rceil.$$
As a consequence, $|B_1|=0$ and $\left\lceil \frac{\Delta-1}{2}\right\rceil|B_2|=\sum_{w \in B_2}\left\lceil \frac{1}{2}|N(w) \cap B_0|\right\rceil$ hold,
which leads to that $f$ is a $\gamma_{StR}(G)$-function for which
$|B_1|=0$ and also $|N(w) \cap B_0|=\Delta-1$ for each $w \in B_2$.

Conversely, assume there exists a $\gamma^t_{StR}(G)$-function $f=(B_0,B_1,B_2)$ such that $|B_1|=0$ and that $|N(w) \cap B_0|=\Delta-1$ for each $w \in B_2$.
Since $B_1 \cup B_2=B_2$ is a total dominating set of $G$, we have
$$\gamma_t(G) \leq |B_2| \leq \frac{1}{1+ \left\lceil \frac{\Delta-1}{2}\right\rceil}\gamma^t_{StR}(G)=\frac{1}{\left\lceil \frac{\Delta+1}{2} \right\rceil}\gamma^t_{StR}(G).$$
Thus, it follows $\gamma_{StR}^t(G)\geq \left\lceil \frac{\Delta+1}{2}\right\rceil \gamma_t(G)$. Therefore, by using Observation \ref{1}, we obtain the equality $\gamma^t_{StR}(G)=\left\lceil \frac{\Delta+1}{2}\right\rceil\gamma_t(G)$.
\end{proof}

Our final results in this section relate the total strong Roman domination number and the (standard) domination number of graphs.

\begin{theorem}\label{three}
{\em Let $G$ be a graph without isolated vertices. Then,
$$\gamma^t_{StR}(G) \le \left(\left\lceil  \frac{\Delta-1}{2}\right\rceil+2\right)\gamma(G).$$
Moreover, if the equality holds, then every $\gamma(G)$-set $S$ is an efficient dominating set, and every vertex in $S$ has degree $\Delta$.}
\end{theorem}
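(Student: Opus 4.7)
The plan is to construct, for any $\gamma(G)$-set $S$, a TStRD-function whose weight realises the bound. I fix such an $S$, let $I \subseteq S$ be the set of vertices isolated in the induced subgraph $G[S]$, and for each $v \in I$ I select a neighbour $v' \in N(v) \setminus S$ (such a vertex exists because $\delta(G) \geq 1$ and $v$ has no neighbour inside $S$); call the resulting set of helpers $H$. I then define $f$ by $f(v) = \lceil (\Delta - 1)/2 \rceil + 1$ on $S$, $f(u) = 1$ on $H$, and $f(w) = 0$ otherwise. Totality holds because each $v \in S \setminus I$ has a neighbour in $S$, each $v \in I$ has the helper $v'$ as a positive-valued neighbour, and each $u \in H$ is adjacent by construction to the corresponding vertex of $I$. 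For the strong Roman condition, each $v \in S$ has at least one neighbour in $S \cup H$, so $|N(v) \cap B_0| \leq d(v) - 1 \leq \Delta - 1$, and therefore $f(v) = \lceil (\Delta-1)/2 \rceil + 1 \geq 1 + \lceil |N(v) \cap B_0|/2 \rceil$; and every zero-labelled vertex is dominated by some $v \in S$ since $S$ dominates $V(G)$.

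Since each $v \in S$ contributes at most one vertex to $H$, the weight of $f$ satisfies
\[ \gamma^t_{StR}(G) \leq w(f) = \left(\lceil (\Delta-1)/2 \rceil + 1\right)|S| + |H| \leq \left(\lceil (\Delta-1)/2 \rceil + 2\right)\gamma(G), \]
which establishes the inequality.

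For the equality characterisation, I assume $\gamma^t_{StR}(G) = (\lceil (\Delta-1)/2 \rceil + 2)\gamma(G)$ and apply the construction to an arbitrary $\gamma(G)$-set $S$. Both displayed inequalities must then be equalities. The condition $|H| = |S|$ forces $I = S$ (so $S$ is independent in $G$), and it must remain valid for every choice of helpers: if two vertices of $S$ shared a common neighbour outside $S$, choosing that common vertex as helper for both would yield $|H| < |S|$, a contradiction. Thus distinct vertices of $S$ have pairwise disjoint open neighbourhoods in $V \setminus S$, which together with the independence of $S$ implies that every vertex of $G$ is dominated by exactly one vertex of $S$, i.e., $S$ is efficient. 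The other equality, $w(f) = \gamma^t_{StR}(G)$, means that $f$ is a $\gamma^t_{StR}(G)$-function, so at every $v \in S$ the value $\lceil (\Delta-1)/2 \rceil + 1$ must be the smallest feasible label; replacing $f(v)$ by $1 + \lceil (d(v)-1)/2 \rceil$ would otherwise give a strictly lighter TStRD-function, forcing $\lceil (d(v)-1)/2 \rceil = \lceil (\Delta-1)/2 \rceil$, and with $d(v) \leq \Delta$ this gives $d(v) = \Delta$.

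The main obstacle I expect is the efficiency argument in the equality case, since one has to use the freedom in the choice of helpers to rule out shared neighbours among vertices of $S$; the remaining label-reduction argument for the degree statement is essentially a direct optimality comparison.
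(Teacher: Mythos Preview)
Your construction of the TStRD-function and the derivation of the upper bound are essentially the paper's argument (the paper writes $S',S''$ for your $I,H$), and your treatment of the efficiency part of the equality case is likewise the same: the paper also argues that a shared neighbour of two vertices of $S$ would let the helpers collapse, forcing $|S''|<|S'|$.

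The degree claim, however, contains a genuine gap. From the optimality of $f$ you correctly obtain $\lceil (d(v)-1)/2\rceil=\lceil(\Delta-1)/2\rceil$, but this does \emph{not} force $d(v)=\Delta$: when $\Delta$ is odd, $d(v)=\Delta-1$ satisfies the same equation. Concretely, let $G$ be the disjoint union of $K_{1,5}$ and $K_{1,4}$. Then $\Delta=5$, $\gamma(G)=2$, the unique $\gamma$-set (the two centres) is efficient, and one checks $\gamma^t_{StR}(G)=4+4=8=(\lceil 4/2\rceil+2)\cdot 2$, so equality holds in the bound---yet the centre of $K_{1,4}$ has degree $4\neq\Delta$. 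The paper's own proof simply asserts the degree conclusion at this point without argument, so the defect is in the statement rather than in your strategy; the most your reduction argument can yield is $d(v)\in\{\Delta-1,\Delta\}$ when $\Delta$ is odd (and $d(v)=\Delta$ when $\Delta$ is even).
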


\begin{proof}  Let $S$ be a $\gamma(G)$-set, and consider $S'$ represents the set of vertices in $S$
that have degree zero in $G[S]$ (it could happen, $S' = \emptyset$). For every vertex $v \in S'$ (if it exists), we chose one vertex adjacent to $v$, and denote such vertex as $v'$.
Let $S'' = \cup_{v \in S'} \{v'\}$. Let $f$ be  a TStRD-function on $G$ defined as follows:
(a) for each vertex $v \in S$, let $f(v) =\left\lceil  \frac{\Delta-1}{2}\right\rceil+1$,
(b) for each vertex $v \in S'' $, let $f(v) = 1$, and
(c) for each vertex $v \in V(G) \setminus (S \cup S'')$, let $f(v) = 0$.
Thus, it follows $\gamma^t_{StR}(G) \le f(V(G)) \le (\left\lceil  \frac{\Delta-1}{2}\right\rceil+1)|S|+|S''| \le (\left\lceil  \frac{\Delta-1}{2}\right\rceil+2)|S| = (\left\lceil  \frac{\Delta-1}{2}\right\rceil+2) \gamma(G)$, as desired.

Assume next $\gamma^t_{StR}(G) = (\left\lceil  \frac{\Delta-1}{2}\right\rceil+2)\gamma(G)$.
Let $S$ be any $\gamma(G)$-set, and let $S'$ and $S''$ be two sets defined as previously described.
It is true that $(\left\lceil  \frac{\Delta-1}{2}\right\rceil+2)\gamma(G) = \gamma^t_{StR}(G) \le (\left\lceil  \frac{\Delta-1}{2}\right\rceil+1)|S| + |S''| \le (\left\lceil  \frac{\Delta-1}{2}\right\rceil+1)|S| + |S'| \le (\left\lceil  \frac{\Delta-1}{2}\right\rceil+2)|S| = (\left\lceil  \frac{\Delta-1}{2}\right\rceil+2)\gamma(G)$.
As a consequence, an equality relation must occur in this last inequality chain.
So, $|S''| = |S'| = |S|$,  which leads to claim that $S$ is an independent set. Moreover,
every vertex belonging to the set $S$ is of degree $\Delta$.
But then  $d_G(u,v) \ge 2$ (distance between $u$ and $v$) for any two distinct vertices $u$ and $v$  in $S$.
 We shall show that $d_G(u,v) \ge 3$. For a contradiction purpose, we suppose $d_G(u,v) = 2$.
Let $w$ be a vertex adjacent to both vertices $u$ and $v$, and select $u' = v' = w$ where,
as above, the vertices $u'$ and $v'$ are those vertices chosen to be neighbors of $u$ and $v$, respectively.
According to this, we note that $|S''| < |S'|$, which produces a contradiction.
 Therefore, we must have $d_G(u,v) \ge 3$ for any pair of distinct vertices $u$ and $v$ from the set $S$.
Consequently, the set $S$ is an efficient dominating set in the graph $G$.
Therefore, we have obtained that every $\gamma(G)$-set $S$ is an efficient dominating set in $G$, and that every vertex in $S$ has degree $\Delta$.
\end{proof}

\begin{proposition}
{\em Let $G$ be a connected graph of order $n$ and without isolated vertices. Then $\gamma(G)+\gamma^t_{StR}(G) \leq \frac{3n}{2}$.
The equality holds if and only if $G=C_4$ or $G$ is a corona, $cor(F )$, of some graph $F$.}
\end{proposition}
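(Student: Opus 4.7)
The plan is to derive the bound directly by summing two independent inequalities that have already been established, and then to extract the equality case by noting that both inequalities must be tight simultaneously.

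For the upper bound, since $G$ has no isolated vertex we have $\delta(G)\ge 1$, so Theorem \ref{th6}(a) gives $\gamma(G)\le n/2$. Combining this with Observation \ref{o2}, which yields $\gamma^t_{StR}(G)\le n$, we obtain
$$\gamma(G)+\gamma^t_{StR}(G)\le \frac{n}{2}+n=\frac{3n}{2},$$
as required.

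For the equality case, since both bounds added together give $3n/2$, equality in the sum forces equality in each summand. That is, $\gamma(G)+\gamma^t_{StR}(G)=3n/2$ if and only if $\gamma(G)=n/2$ \emph{and} $\gamma^t_{StR}(G)=n$ hold simultaneously. I would now invoke the two available characterizations. By Theorem \ref{th6}(b) the first equality forces the components of $G$ to be either $C_4$ or coronas $H\circ K_1$ of connected graphs $H$; since $G$ is assumed connected, this means $G=C_4$ or $G=\mathrm{cor}(F)$ for some connected graph $F$. Independently, Theorem \ref{thn} characterizes the graphs with $\gamma^t_{StR}(G)=n$.

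The final step is to verify that the first characterization is already contained in the second, so that no further restriction arises. This is immediate: $C_4$ is a cycle, hence appears in case (i) of Theorem \ref{thn}, and every $G=\mathrm{cor}(F)$ appears in case (ii); so every graph produced by Theorem \ref{th6}(b) automatically satisfies $\gamma^t_{StR}(G)=n$. Conversely, both $C_4$ and any corona $\mathrm{cor}(F)$ (with $F$ connected) realize $\gamma(G)=n/2$ by Theorem \ref{th6}(b). I expect no serious obstacle here; the only point worth a brief check is that ``corona of some graph $F$'' in our statement corresponds to ``connected $F$'' once we assume $G$ connected, which is routine.
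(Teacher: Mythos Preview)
Your proof is correct and follows essentially the same approach as the paper: sum the bounds from Theorem~\ref{th6}(a) and Observation~\ref{o2}, then intersect the equality characterizations from Theorem~\ref{th6}(b) and Theorem~\ref{thn}. The paper's own proof is actually terser than yours---it simply says the equality case ``follows by combining Theorem~\ref{th6}(b) and Theorem~\ref{thn}''---whereas you spell out explicitly that the connected graphs from Theorem~\ref{th6}(b) already lie inside the list of Theorem~\ref{thn}, which is a worthwhile verification.
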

\begin{proof}
By Theorem \ref{th6}(a), $\gamma (G) \leq \frac{n}{2}$,
and by Observation \ref{o2}, $ \gamma^t_{StR}(G) \leq n$.
Hence, $\gamma(G) + \gamma^t_{StR}(G) \leq \frac{3n}{2}$.
The equality holds if and only if both $\gamma (G) = \frac{n}{2}$ and
$ \gamma^t_{StR}(G) = n$ are valid. The required result follows by combining Theorem \ref{th6}(b)
and Theorem \ref{thn}.
\end{proof}

\section{Trees}

In this section, we present two bounds for the total strong Roman domination number of trees. First, we note that by Proposition \ref{t+1}, for any tree $T$ of order $n\ge 4$, it follows $\gamma_{StR}^{t}(T)\ge  \gamma_t(T)+2$. We shall improve such bound whether we have maximum degree larger than five. To this end, we need the following lemma.

\begin{lemma}\label{leaves-zero}
{\em Let $T$ be a tree different from a star. Then, there exists a $\gamma^t_{StR}(T)$-function $f=(B_0, B_1, B_2)$ such that {\color{black} every leaf of $T$ belongs to $B_0$.}
}
\end{lemma}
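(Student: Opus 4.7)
The plan is to take a $\gamma^t_{StR}(T)$-function $f=(B_0,B_1,B_2)$ that maximizes $|L(T)\cap B_0|$, where $L(T)$ is the set of leaves of $T$, and to derive a contradiction if some leaf lies outside $B_0$. As a preliminary reduction, I would first observe that in any $\gamma^t_{StR}$-function every leaf has value at most $1$: if a leaf $\ell$ had $f(\ell)\ge 2$, then decreasing $f(\ell)$ to $1$ would preserve every strong Roman condition (the unique neighbor of $\ell$ is its support $s\notin B_0$ by Observation \ref{ab}, so no vertex relies on $\ell$ as its Roman-dominator) as well as the total condition at $s$ (which still sees $\ell$ as a positive-weight neighbor), contradicting the minimality of $\omega(f)$. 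Hence every leaf lies in $B_0\cup B_1$, and it suffices to rule out leaves in $B_1$.

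Assume for contradiction that $\ell\in L(T)\cap B_1$ with support $s$, so $f(s)\ge 1$. The argument splits on $f(s)$. In the case $f(s)\ge 2$, if $s$ has a positive-weight neighbor different from $\ell$, then setting $f(\ell)=0$ and---only when $|N(s)\cap B_0|$ is even---adding $1$ to $f(s)$ produces a $\gamma^t_{StR}$-function of weight at most $\omega(f)$ but with one more leaf in $B_0$, contradicting the choice of $f$. If instead $\ell$ is $s$'s unique positive-weight neighbor, then every other neighbor of $s$ lies in $B_0$; since $T$ is not a star, $s$ has a non-leaf neighbor $u\in B_0$, and the swap $f(\ell)=0$, $f(u)=1$ leaves $|N(s)\cap B_0|$ unchanged, preserves every Roman and total condition (in particular $s$ gains $u$ as its positive-weight neighbor), and strictly enlarges $L(T)\cap B_0$, again a contradiction.

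The case $f(s)=1$ is the delicate one. Since $s$ cannot Roman-dominate any neighbor, every leaf of $s$ must lie in $B_1$ and each non-leaf $B_0$-neighbor $u_j$ of $s$ must be Roman-dominated by some $w_j\in B_2\setminus\{s\}$. Let $k:=|N(s)\cap B_0|$. When $k\le 1$ and $s$ has a further positive neighbor available, the direct substitution $f(\ell)=0$, $f(s)=2$ suffices. For the remaining subcases (including $k\ge 2$), I would carry out a coordinated multi-vertex modification: raise $f(s)$ to $2$, set $f(\ell)=0$, promote every non-leaf $B_0$-neighbor $u_j$ of $s$ from $B_0$ to $B_1$ (so that $s$ gains positive neighbors for its total condition and the new $|N(s)\cap B_0|$ drops to just $\{\ell\}$), and at each $w_j$ compensate by demoting one of its $B_1$-leaves to $B_0$, or, failing that, by decreasing $f(w_j)$ by one when the parity of $|N(w_j)\cap B_0|$ permits. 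The crucial identity is that swapping $u_j$ out of $N(w_j)\cap B_0$ and a leaf of $w_j$ into $N(w_j)\cap B_0$ preserves $|N(w_j)\cap B_0|$ exactly, so $w_j$'s strong Roman condition is automatically maintained, while $u_j\in B_1$ now provides $w_j$'s total-condition neighbor.

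The main obstacle is to verify that the compensating step at each $w_j$ is always available. In the pathological configuration where $w_j$ has no $B_1$-leaf and $|N(w_j)\cap B_0|$ has an unfavorable parity, the above local modification would inflate the weight by $1$; handling this would require appealing to the minimality of $\omega(f)$ to rule out that configuration, typically by exhibiting a strictly lighter $\gamma^t_{StR}$-function obtained by a further weight transfer elsewhere in the tree.
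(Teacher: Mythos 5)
Your overall strategy (fix a minimum-weight function maximizing the number of leaves in $B_0$ and perform local exchanges) is the same in spirit as the paper's, and your preliminary reductions are essentially sound, provided you add the remark that minimality forces every vertex of $B_2$ to actually protect one of its $B_0$-neighbours (otherwise its label could be lowered to $1$), which is what guarantees $f(s)\ge 1+\left\lceil |N(s)\cap B_0|/2\right\rceil$ before your modifications. The real problem is the gap you flag yourself at the end: the ``pathological configuration'' in the case $f(s)=1$ --- a protector $w_j$ with no $B_1$-leaf and with $|N(w_j)\cap B_0|$ of unfavourable parity --- cannot be ruled out, and in fact the lemma is false precisely because of it. Let $T$ be the path $v_1v_2v_3v_4v_5v_6$ with an extra leaf $z$ attached to $v_4$. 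The assignment $f(v_1)=f(v_2)=1$, $f(v_4)=f(v_5)=2$, $f(v_3)=f(z)=f(v_6)=0$ is a TStRD-function of weight $6$: the positive vertices induce the two edges $v_1v_2$ and $v_4v_5$; the vertices $v_3$ and $z$ are protected by $v_4$, which has exactly two neighbours in $B_0$, so $f(v_4)=2$ suffices; and $v_6$ is protected by $v_5$. On the other hand, any TStRD-function $g$ with $g(v_1)=g(z)=g(v_6)=0$ forces $g(v_2),g(v_4),g(v_5)\ge 2$ (each of these three leaves has its support as unique neighbour, which must therefore lie in $B_2$) and $g(v_3)\ge 1$ (the total condition at the positive vertex $v_2$, whose other neighbour $v_1$ is now $0$), hence $\omega(g)\ge 7$. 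So no function of weight at most $6$ places all leaves in $B_0$, and the statement fails for this tree. Note that this instance is exactly your bad case with $s=v_2$, $\ell=v_1$, $u_1=v_3$, $w_1=v_4$: the vertex $v_4$ has no $B_1$-leaf (its leaf $z$ already lies in $B_0$) and $|N(v_4)\cap B_0|=2$ is even, so neither of your compensation moves is available.

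For what it is worth, the paper's own proof collapses at the corresponding step: after reducing to $f(x)=1$ and $f(x')\ge 1$, it ``exchanges'' the labels of the leaf $x$ and a zero-labelled non-leaf neighbour $y$ of the support $x'$; but this puts $x$ into $B_0$ while its unique neighbour $x'$ may remain in $B_1$, so $x$ is no longer strongly Roman dominated (in the example above, exchanging $v_1$ and $v_3$ leaves $v_1$ unprotected). So the honest conclusion is not that your argument needs one more idea, but that the configuration you isolated is a genuine obstruction: the lemma requires additional hypotheses, and the results that invoke it (Theorem \ref{stR-t} and the final theorem) need their proofs revisited.
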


\begin{proof}
Let $f=(B_0, B_1, B_2)$ be a $\gamma^t_{StR}(T)$-function. Suppose there exist a leaf $x$ of $T$ such that $f(x)\ge 1$ and let $x'$ be the support vertex adjacent to $x$. {\color{black}By definition $f(x')\ge 1$. If $f(x)\ge 2$, then} one can easily construct a TSrRD-function with weight smaller than $f$, which is not possible. {\color{black}Thus, it must happen $f(x)=1$.} If the support vertex $x'$ has $t\ge 2$ adjacent leaves labeled with one, then we can construct a new TStRD-function of $T$ of weight smaller than or equal to $\gamma^t_{StR}(T)$, by relabeling $t-1$ of such leaves with zero, and the vertex $x'$ with $f(x')+\left\lceil (t-1)/2\right\rceil$. In consequence, we may consider that every support vertex has at most one leaf labeled with one under $f$, say our $x$, for the support vertex $x'$. If there exists a not leaf vertex $y\in N(x')\setminus\{x\}$ for which $f(y)=0$, then one can ``exchange'' the labels of $x$ and $y$ to construct a new $\gamma^t_{StR}(T)$-function satisfying $f(x)=0$. Hence, we may assume $f(y)\ne 0$ for every not leaf $y\in N(x')\setminus\{x\}$. This also leads to conclude that $f(x')=1+\left\lceil \frac{\vert L_{x'}\vert-1}{2} \right\rceil$, otherwise we can decrease the weight of $f$, which is not possible.
But, then we can construct a new $\gamma^t_{StR}(T)$-function by changing the labels of $x$ and $x'$ to zero and $1+\left\lceil \frac{\vert L_{x'}\vert}{2} \right\rceil$, respectively, and this is either not possible or satisfies our requirement.
\end{proof}

\begin{theorem} \label{stR-t}
{\em For any nontrivial tree $T$ with maximum degree $\Delta(T)$,
$$\gamma_{StR}^{t}(T)\ge  \gamma_t(T)+\left\lceil\frac{\Delta(T)-1}{2}\right\rceil.$$
This bound is sharp for stars.}
\end{theorem}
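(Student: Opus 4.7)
The plan is to split the argument into two cases depending on whether $T$ is a star or not. First I would verify the bound with equality when $T = K_{1,\Delta}$: we have $\gamma_t(K_{1,\Delta}) = 2$, and the function assigning $1 + \lceil (\Delta-1)/2\rceil$ to the center, $1$ to one chosen leaf, and $0$ to the remaining $\Delta - 1$ leaves is a valid TStRD-function of weight $2 + \lceil (\Delta-1)/2\rceil$; a short argument shows it is optimal (the total condition at the center forces at least one leaf to carry positive value, and the center must then handle $\Delta - 1$ zero-labelled leaves, requiring value at least $1 + \lceil (\Delta-1)/2\rceil$). This also confirms the sharpness assertion of the theorem.

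Now assume $T$ is not a star. By Lemma~\ref{leaves-zero}, choose a $\gamma_{StR}^t(T)$-function $f = (B_0, B_1, B_2)$ in which every leaf of $T$ belongs to $B_0$. Two standard observations apply: since $f$ is a total strong Roman dominating function, $B_1 \cup B_2$ is a total dominating set of $T$, so $|B_1| + |B_2| \ge \gamma_t(T)$; and writing $d_v = |N(v) \cap B_0|$, for every $v \in B_2$ the strong condition gives $f(v) \ge 1 + \lceil d_v/2\rceil$. Combining these yields
\begin{equation*}
\omega(f) = |B_1| + \sum_{v \in B_2} f(v) \ge (|B_1| + |B_2|) + \sum_{v \in B_2} \lceil d_v/2\rceil \ge \gamma_t(T) + \sum_{v \in B_2} \lceil d_v/2\rceil.
\end{equation*}
Since each vertex of $B_0$ has at least one neighbor in $B_2$, we have $\sum_{v \in B_2} d_v \ge |B_0|$, and the elementary inequality $\sum \lceil x_i/2\rceil \ge \lceil (\sum x_i)/2\rceil$ then gives $\sum_{v \in B_2} \lceil d_v/2\rceil \ge \lceil |B_0|/2\rceil$.

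To conclude, I would use the basic tree fact that the number $L(T)$ of leaves of any tree satisfies $L(T) \ge \Delta(T)$: removing a vertex $u$ of maximum degree $\Delta$ leaves $\Delta$ subtrees of $T$, and each such subtree contributes at least one leaf of $T$ (its unique vertex when the subtree is a singleton, or any leaf of the subtree distinct from its root otherwise). Because every leaf of $T$ lies in $B_0$, we obtain $|B_0| \ge L(T) \ge \Delta$, whence $\lceil |B_0|/2\rceil \ge \lceil \Delta/2\rceil \ge \lceil (\Delta-1)/2\rceil$, and the desired lower bound on $\gamma_{StR}^t(T)$ follows. I do not foresee a significant obstacle: the key ingredient is Lemma~\ref{leaves-zero}, already available, combined with the leaf-count inequality $L(T) \ge \Delta(T)$; the rest is a direct chain of elementary bounds.
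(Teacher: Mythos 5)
Your route is genuinely different from the paper's. The paper first invokes Proposition \ref{t+1} to get $\gamma_{StR}^{t}(T)\ge \gamma_t(T)+2$, which disposes of $\Delta(T)\le 5$, and then runs an induction on the order along a diametrical path for $\Delta(T)\ge 6$. You instead give a single global counting argument built on Lemma \ref{leaves-zero}: it avoids induction entirely and in fact proves the stronger statement $\gamma_{StR}^{t}(T)\ge \gamma_t(T)+\lceil L(T)/2\rceil$, where $L(T)$ is the number of leaves, from which the theorem follows via $L(T)\ge \Delta(T)$. The auxiliary steps are all sound: $B_1\cup B_2$ is a TD-set, the edge count $\sum_{v\in B_2}d_v\ge |B_0|$, the superadditivity $\sum\lceil x_i/2\rceil\ge\lceil(\sum x_i)/2\rceil$, the leaf count $L(T)\ge\Delta(T)$, and the separate treatment of stars (needed since Lemma \ref{leaves-zero} excludes them).

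There is, however, one step whose stated justification is incorrect: you write that ``for every $v\in B_2$ the strong condition gives $f(v)\ge 1+\lceil d_v/2\rceil$.'' The defining condition of an StRD-function is existential, not universal: it only demands that each $u\in B_0$ have \emph{some} neighbour $v\in B_2$ with $f(v)\ge 1+\lceil |N(v)\cap B_0|/2\rceil$; a generic vertex of $B_2$ is only guaranteed $f(v)\ge 2$, so the inequality $\omega(f)\ge |B_1|+|B_2|+\sum_{v\in B_2}\lceil d_v/2\rceil$ does not follow directly from the definition. The inequality is nonetheless true for a \emph{minimum-weight} TStRD-function, but this requires an exchange argument: if some $v\in B_2$ had $2\le f(v)<1+\lceil d_v/2\rceil$, then $v$ is not a legal defender of any of its $B_0$-neighbours, so each of those neighbours already has another legal defender in $B_2\setminus\{v\}$; resetting $f(v):=1$ leaves $B_0$ (hence every quantity $|N(w)\cap B_0|$ for $w\in B_2\setminus\{v\}$) and the set of positively labelled vertices unchanged, so the modified function is still a TStRD-function of strictly smaller weight, contradicting minimality. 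Add this short lemma (the paper itself tacitly uses the resulting identity $\gamma_{StR}^{t}(G)=|B_1|+|B_2|+\sum_{w\in B_2}\lceil |N(w)\cap B_0|/2\rceil$ in Section 3 without proof) and your argument is complete and, in my view, cleaner than the published one.
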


\begin{proof}
By Proposition \ref{t+1}, for any tree $T$ of order $n\ge 4$, it follows $\gamma_{StR}^{t}(T)\ge  \gamma_t(T)+2$. Thus, if $\Delta(T)\le 5$, then we deduce that $\gamma_{StR}^{t}(T)\ge \gamma_t(T)+2 \ge \gamma_t(T)+\left\lceil\frac{\Delta(T)-1}{2}\right\rceil.$ In consequence, from now on we may assume $\Delta(T)\ge 6$. This implies that $n\ge 7$, and also that $\rm{diam}(T)\ge 2$.

If $\rm{diam}(T)=2$, then $T$ is a star with $\left\lceil\frac{\Delta(T)-1}{2}\right\rceil+2=\gamma_{StR}^{t}(T)=\left\lceil\frac{\Delta(T)-1}{2}\right\rceil+\gamma_t(T)$ (this also shows the sharpness of the bound). If $\rm{diam}(T)=3$, then $T$ is a double star $DS_{p,q}, (p\ge 1, q\ge 5)$, where $\Delta(T)=q+1$, $\gamma_t(T)=2$ and
$\gamma_{StR}^{t}(T)=\left\lceil\frac{p}{2}\right\rceil+1+\left\lceil\frac{q}{2}\right\rceil+1$. By the fact that $\left\lceil\frac{p}{2}\right\rceil\ge 1$, we have $\left\lceil\frac{p}{2}\right\rceil+\left\lceil\frac{q}{2}\right\rceil+2> \left\lceil\frac{q}{2}\right\rceil+2$. Hence $\gamma_{StR}^{t}(T)>  \gamma_t(T)+\left\lceil\frac{\Delta(T)-1}{2}\right\rceil$. The remaining part of the proof shall be done by induction on the order $n$ of $T$.

Let $T$ be a tree of order $n$ and $\rm{diam}(T)\ge 4$, and assume that any tree $T'$ of order $n'<n$ and $\Delta(T')\ge 6$ satisfies $\gamma_{StR}^{t}(T')\ge \gamma_t(T')+\left\lceil\frac{\Delta(T')-1}{2}\right\rceil$.
Suppose ${\rm diam}(T) = k-1$, and let $P := v_1,v_2,\ldots, v_{k}$ be a diametrical path of $T$ such that $v_2$ has the smallest possible degree. Root $T$ at $v_k$ and let $T'=T-\{v_1\}$.

Note that by the choice of $P$, the removing of the vertex $v_1$ does not change the maximum degree of $T'$ with respect to that of $T$. Thus $\Delta(T')=\Delta(T)\ge 6$.
On the other hand, it clearly happens that $\gamma_t(T)-1\le \gamma_t(T')\le\gamma_t(T)$. Also, by Lemma \ref{leaves-zero}, there exists a $\gamma^t_{StR}(T)$-function $f=(B_0, B_1, B_2)$ such that for every leaf $x$ of $T$, it follows $f(x)=0$. This allows to claim that
$\gamma^t_{StR}(T)\ge \gamma^t_{StR}(T')$. Now, if $\gamma_t(T')=\gamma_t(T)$, then by using the induction hypothesis, we deduce
\begin{align*}
\gamma_{StR}^{t}(T) &\ge \gamma_{StR}^{t}(T')\hfill\\
&\ge \gamma_t(T')+\left\lceil\frac{\Delta(T')-1}{2}\right\rceil\hfill\\
&= \gamma_t(T)+\left\lceil\frac{\Delta(T)-1}{2}\right\rceil\hfill.
\end{align*}
We next consider $\gamma_t(T')=\gamma_t(T)-1$. We then observe that $v_2$ must have degree two in $T$, since otherwise the removing of $v_1$ to obtain $T'$ will not do $\gamma_t(T')$ strictly smaller than $\gamma_t(T)$. Since $f(v_1)=0$, it must happen that $f(v_2)\ge 2$. Moreover, the neighbor of $v_2$ (which is indeed $v_3\in P$) other than $v_1$ satisfies $f(v_3)\ge 1$. It is clearly now deduced that $f(v_2)=2$, for otherwise we can construct a TStRD-function of $T$ of weight smaller than that of $f$, which is not possible. Thus, from $f$, we construct a TStRD-function $f'$ in $T'$ by taking the restriction of $f$ to $T'$ and only changing the label of $v_2$, that is, making $f'(v_2)=1$. So, we obtain that $\gamma^t_{StR}(T')\le \gamma^t_{StR}(T)-1$. Therefore, by using the induction hypothesis, we deduce
\begin{align*}
\gamma_{StR}^{t}(T) &\ge \gamma_{StR}^{t}(T')+1\hfill\\
&\ge \gamma_t(T')+\left\lceil\frac{\Delta(T')-1}{2}\right\rceil+1\hfill\\
&= \gamma_t(T)-1+\left\lceil\frac{\Delta(T)-1}{2}\right\rceil+1\hfill\\
&= \gamma_t(T)+\left\lceil\frac{\Delta(T)-1}{2}\right\rceil,
\end{align*}
which completes the proof.
\end{proof}

\begin{theorem}
{\em For any tree $T$ of order $n(T)\ge 3$ with maximum degree $\Delta(T)$ and $s(T)$ support vertices,
$$\gamma_{stR}^t(T)\ge \left\lceil \frac{n(T)+s(T)}{\Delta(T)}\right\rceil+1.$$
Furthermore, this bound is sharp for $T\in \{P_3,P_4,P_5,DS_{1,2},DS_{2,2},K_{1,3}\}$.}
\end{theorem}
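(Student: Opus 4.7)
The plan is to reduce the claimed bound to the tree-theoretic inequality
\[
\Delta(T)\,\gamma_t(T) + (\Delta(T)-1)\,s(T) \;\ge\; n(T) + \Delta(T),
\]
via the intermediate estimate $\gamma^t_{StR}(T) \ge \gamma_t(T) + s(T)$.

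First I would handle stars separately: for $T = K_{1,m}$ with $m \ge 2$, a direct optimization over how many leaves to label $1$ gives $\gamma^t_{StR}(K_{1,m}) = 2 + \lceil (m-1)/2 \rceil$, which is always at least the claimed bound $\lceil (m+2)/m\rceil + 1 = 3$. I would also observe that every tree on $n \ge 3$ vertices having only one support vertex must in fact be a star: deleting all leaves produces a subtree in which only the original (unique) support could possibly become a leaf, forcing the subtree to be a single vertex. So we may henceforth assume $T$ is not a star, and therefore $s(T) \ge 2$ and $\Delta(T) \ge 2$.

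For such a $T$, Lemma~\ref{leaves-zero} supplies a $\gamma^t_{StR}(T)$-function $f=(B_0,B_1,B_2)$ with every leaf in $B_0$. Because each leaf's unique neighbor is its support and the strong Roman condition forces that neighbor into $B_2$, every support vertex lies in $B_2$, so $|B_2| \ge s(T)$. Combining this with $f(v) \ge 2$ on $B_2$ and the observation that $B_1 \cup B_2$ is a total dominating set of $T$ (every vertex has a neighbor in $B_1 \cup B_2$, either by the Roman condition if it is in $B_0$, or by the ``total'' part otherwise) yields
\[
\omega(f) \;\ge\; |B_1| + 2|B_2| \;=\; (|B_1|+|B_2|) + |B_2| \;\ge\; \gamma_t(T) + s(T).
\]

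It remains to show $\gamma_t(T) + s(T) \ge \lceil (n+s)/\Delta \rceil + 1$, which (since the left side is an integer) is equivalent to the displayed inequality above. For the first summand I would invoke the standard bound $\Delta\gamma_t(T) \ge n$ coming from $V(T) \subseteq N(S)$ for any total dominating set $S$, hence $n \le \sum_{u \in S}\deg(u) \le \Delta|S|$. For the second, $s \ge 2$ and $\Delta \ge 2$ give $(\Delta-1)s \ge 2(\Delta-1) \ge \Delta$. Adding the two estimates produces $\Delta\gamma_t + (\Delta-1)s \ge n + \Delta$, as required. Sharpness for each of $P_3, P_4, P_5, DS_{1,2}, DS_{2,2}, K_{1,3}$ is then verified by direct computation of $n$, $s$, $\Delta$, and $\gamma^t_{StR}$ and comparing with the bound. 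The main subtlety I anticipate is making the step $s(T)\ge 2$ in the non-star case fully precise; once that is in place, Lemma~\ref{leaves-zero} and the standard $\gamma_t$-bound do the rest.
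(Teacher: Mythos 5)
Your proof is correct, and it takes a genuinely different route from the paper's. The paper proceeds by induction on $n(T)$ along a diametrical path $v_1v_2\ldots v_k$, splitting into cases according to the degree of $v_2$ and comparing $T$ with $T-\{v_1\}$ or $T-\{v_1,v'_1\}$, with the small-diameter cases (stars, double stars, paths) computed directly at the outset. You instead extract from Lemma~\ref{leaves-zero} the clean intermediate inequality $\gamma^t_{StR}(T)\ge\gamma_t(T)+s(T)$ for non-star trees --- every support vertex is forced into $B_2$ once all leaves lie in $B_0$, and $B_1\cup B_2$ is a total dominating set --- and then reduce the claimed bound to the elementary counting facts $\Delta(T)\gamma_t(T)\ge n(T)$ and $(\Delta(T)-1)s(T)\ge\Delta(T)$, the latter using $s(T)\ge 2$ and $\Delta(T)\ge 2$ for non-stars. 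All steps check out: the passage from $\gamma_t+s\ge\left\lceil (n+s)/\Delta\right\rceil+1$ to $\Delta\gamma_t+(\Delta-1)s\ge n+\Delta$ is legitimate because $\gamma_t+s-1$ is an integer, the observation that a tree on at least three vertices with a single support vertex must be a star is sound, and the star case is correctly dispatched by the direct computation $\gamma^t_{StR}(K_{1,m})=2+\left\lceil (m-1)/2\right\rceil\ge 3$. Your argument is shorter, avoids induction entirely, and in fact establishes the stronger standalone bound $\gamma^t_{StR}(T)\ge\gamma_t(T)+s(T)$ for non-star trees, which dominates the stated inequality; the paper's inductive leaf-deletion scheme buys a template reused elsewhere in that section (notably in Theorem~\ref{stR-t}), but for this particular statement your direct route is arguably cleaner.
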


\begin{proof}
The proof shall be made by induction on the order $n(T)$ of the tree $T$. One can verify that the statement is true whether the order $n(T)\le 3$. Hence, we now on in this proof consider $n(T)\geq 4$, and that every tree $T'$ of order $n(T')<n(T)$ with $s(T')$ support vertices satisfies the bound $\gamma_{stR}^t(T')\ge \left\lceil \frac{n(T')+s(T')}{\Delta(T')}\right\rceil+1$. If $T=K_{1,n-1}$ is a star, then $\gamma_{stR}^t(T)=\left\lceil\frac{n+2}{2}\right\rceil\ge\left\lceil\frac{n+1}{n-1}\right\rceil+1$ and the equality holds for $n=4$.
Likewise, assume that $T$ is a double star. If $p=q=1$, then $T=P_4$ and $\gamma_{stR}^t(T)=4=\left\lceil\frac{6}{2}\right\rceil+1$.
If $T=DS_{1,q}$, with $q\ge 2$, then we have $\gamma_{stR}^t(T)=\left\lceil\frac{q}{2}\right\rceil+3\ge\left\lceil\frac{q+5}{q+1}\right\rceil+1$ and the equality holds for $q=2$.
If $T=DS_{p,q}$, with $q\ge p\ge 2$, then $\gamma_{stR}^t(T)=\left\lceil\frac{q}{2}\right\rceil+\left\lceil\frac{p}{2}\right\rceil+2\ge \left\lceil\frac{p+5}{q+1}\right\rceil+1$ and the equality holds for $p=q=2$. If $T$ is a path $P_n$, then by using Observation \ref{o3} and Theorem \ref{path2}, we have $\gamma_{stR}^t(T)=n\ge \left\lceil \frac{n+2}{2}\right\rceil+1$, and the equality holds if $n=4$ or $n=5$.

Consequently, we may assume that ${\rm diam}(T)\ge 4$ and that $\Delta(T)\ge 3$.
Let $v_{1}v_{2}\ldots v_{k}$ be a diametrical path in $T$ such that $v_2$ has the smallest possible degree. Note that all the descendants of $v_2$ are leaves adjacent to $v_2$. We consider the tree $T$ rooted at the vertex $v_{k}$, and analyze the following situations.

\smallskip
\noindent
\textbf{Case 1:} $v_2$ has degree two. Let $T'=T-\{v_1\}$. As in the proof of Theorem \ref{stR-t}, by the choice of $P$, the removing of the vertex $v_1$ does not change the maximum degree of $T'$ with respect to that of $T$. Thus $\Delta(T')=\Delta(T)\ge 3$. Also, $v_2$ is not a support vertex of $T'$, and by using the same idea as in the proof of Theorem \ref{stR-t}, we deduce that $\gamma_{StR}^{t}(T)\ge \gamma_{StR}^{t}(T')+1$. Thus, by using the induction hypothesis and taking into account that $s(T)=s(T')+1$ and $n(T)=n(T')+1$, it follows,
\begin{align*}
\gamma_{StR}^{t}(T) &\ge \gamma_{StR}^{t}(T')+1\hfill\\
&\ge \left\lceil\frac{n(T')+s(T')}{\Delta(T')}\right\rceil+2\hfill\\
&\ge \left\lceil\frac{n(T)-1+s(T)-1}{\Delta(T)}\right\rceil+2\hfill\\
&= \left\lceil\frac{n(T)+s(T)+\Delta(T)-2}{\Delta(T)}\right\rceil+1\hfill\\
&\ge \left\lceil\frac{n(T)+s(T)}{\Delta(T)}\right\rceil+1.
\end{align*}

\noindent
\textbf{Case 2:} $v_2$ has degree larger than three. This means $v_2$ has at least three adjacent leaves. Let $v'_1$ be a leaf adjacent to $v_2$ other than $v_1$ and let $T''=T-\{v_1,v'_1\}$. Again, the removing of the vertices $v_1,v'_1$ does not change the maximum degree, and so $\Delta(T'')=\Delta(T)\ge 3$. Moreover, by Lemma \ref{leaves-zero}, there exists a $\gamma^t_{StR}(T)$-function $f=(B_0, B_1, B_2)$ such that for every leaf $x$ of $T$, it follows $f(x)=0$. In this sense, since $f(v_1)=f(v'_1)=0$, we can construct a TStRD-function for $T''$ by decreasing the label of $v_2$ by one, and maintaining the remaining labels unchanged. This leads to claim $\gamma^t_{StR}(T)\ge \gamma^t_{StR}(T'')+1$. Thus, since $v_2$ continues being a support vertex in $T''$, by using the induction hypothesis and the equalities $s(T)=s(T'')$ and $n(T)=n(T'')+2$, we deduce,
\begin{align*}
\gamma_{StR}^{t}(T) &\ge \gamma_{StR}^{t}(T'')+1\hfill\\
&\ge \left\lceil\frac{n(T'')+s(T'')}{\Delta(T'')}\right\rceil+2\hfill\\
&\ge \left\lceil\frac{n(T)-2+s(T)}{\Delta(T)}\right\rceil+2\hfill\\
&= \left\lceil\frac{n(T)+s(T)+\Delta(T)-2}{\Delta(T)}\right\rceil+1\hfill\\
&\ge \left\lceil\frac{n(T)+s(T)}{\Delta(T)}\right\rceil+1.
\end{align*}

\noindent
\textbf{Case 3:} $v_2$ has degree three. In order to simplify the proof, we shall adapt Case 2 to this situation. We again define $T''=T-\{v_1,v'_1\}$. Clearly, now $v_2$ is not a support vertex, but a leaf in $T''$. Since $v_2$ has degree three in $T'$ and is adjacent to two leaves $v_1,v'_1$ such that $f(v_1)=f(v'_1)=0$, it must happen that $f(v_3)\ge 1$. Consequently, it must happen $f(v_2)=2$. Thus, we construct a new TStRD-function on $T''$ by relabeling $v_2$ with one, which means $\gamma^t_{StR}(T)\ge \gamma^t_{StR}(T'')+1$. Now, since $v_2$ is not a support vertex in $T''$, by using the induction hypothesis and the equalities $s(T)=s(T'')+1$ and $n(T)=n(T'')+2$, we deduce that (the last inequality follows since $\Delta(T)\ge 3$),
\begin{align*}
\gamma_{StR}^{t}(T) &\ge \gamma_{StR}^{t}(T'')+1\hfill\\
&\ge \left\lceil\frac{n(T'')+s(T'')}{\Delta(T'')}\right\rceil+2\hfill\\
&\ge \left\lceil\frac{n(T)-2+s(T)-1}{\Delta(T)}\right\rceil+2\hfill\\
&= \left\lceil\frac{n(T)+s(T)+\Delta(T)-3}{\Delta(T)}\right\rceil+1\hfill\\
&\ge \left\lceil\frac{n(T)+s(T)}{\Delta(T)}\right\rceil+1,
\end{align*}
and this completes the proof.
\end{proof}


\end{document}